\crefname{hypothesis}{Hypothesis}{Hypotheses}
\newcommand*{\addFileDependency}[1]{
  \typeout{(#1)}
  \@addtofilelist{#1}
  \IfFileExists{#1}{}{\typeout{No file #1.}}
}
\newcommand*{\myexternaldocument}[1]{%
    \externaldocument{#1}%
    \addFileDependency{#1.tex}%
    \addFileDependency{#1.aux}%
}
\newcommand{\adots}{\reflectbox{$\ddots$}}
\newcommand{\vcr}[1]{\bm{#1}}
\newcommand{\mat}[1]{\bm{#1}}
\newcommand{\tsr}[1]{\bm{\mathcal{#1}}}
\newcommand{\fun}[1]{#1}
\newcommand{\tpl}[2]{\mat{T}_{\langle\vcr{#1},#2\rangle}}
\newcommand{\hkl}[2]{\mat{H}_{\langle\vcr{#1},#2\rangle}}
\newcommand{\xmm}[2]{\mat{X}_{\langle #1,#2\rangle}}
\renewcommand{\deg}[1]{\text{deg}(#1)}
\title{Derivation and analysis of fast bilinear algorithms for convolution\thanks{Submitted to the editors November 20th, 2019.}}
\author{Caleb Ju\thanks{Department of Computer Science, University of Illinois at Urbana-Champaign (\email{calebju2@illinois.edu}, \email{solomon2@illinois.edu}).} 
\and 
Edgar Solomonik\footnotemark[2]} 
\ifpdf \hypersetup{ pdftitle={Derivation and analysis of fast bilinear algorithms for convolution},
pdfauthor={} } \fi
\DeclareMathAlphabet{\pazocal}{OMS}{zplm}{m}{n}
\begin{document}

\maketitle

\begin{abstract} 
The prevalence of convolution in applications within signal processing, deep
    neural networks, and numerical solvers has motivated the development of
    numerous fast convolution algorithms. In many of these problems,
    convolution is performed on terabytes or petabytes of data, so even
    constant factors of improvement can significantly reduce the computation
    time.  We leverage the formalism of bilinear algorithms to describe and
    analyze all of the most popular approaches.  This unified lens permits us
    to study the relationship between different variants of convolution as well
    as to derive error bounds and analyze the cost of the various algorithms.
    We provide new derivations, which predominantly leverage matrix and tensor
    algebra, to describe the Winograd family of convolution algorithms as well
    as reductions between 1D and multidimensional convolution. We provide cost
    and error bounds as well as experimental numerical studies. Our
    experiments for two of these algorithms, the overlap-add approach and
    Winograd convolution algorithm with polynomials of degree greater than one,
    show that fast convolution algorithms can rival the accuracy of the fast
    Fourier transform (FFT) without using complex arithmetic. These algorithms
    can be used for convolution problems with multidimensional inputs or for
    filters larger than size of four, extending the state-of-the-art in
    Winograd-based convolution algorithms.
\end{abstract}

\begin{keywords} convolution, bilinear algorithms, Winograd convolution, convolutional neural networks  \end{keywords}

\begin{AMS} 65F99, 68W01 \end{AMS}

\section{Introduction} \label{sec:intro}
Discrete convolution is a bilinear function that combines two sequences of data
to produce a third. Problems such as
multiplication~\cite{harvey2016even,schonhage1971schnelle,fnt}, signal
processing~\cite{ifir,filter1,filter2,texture},
statistics~\cite{nguyen2014,lee2005efficient},
acoustics~\cite{adriaensen2006design,quate1970convolution},
geophysics~\cite{rice1962inverse}, molecular
simulation~\cite{pierce2011accelerating}, image
processing~\cite{lavin,pratt1974correlation}, and numerical solvers for partial
differential equations within physics and
chemistry~\cite{pde1,pde2,khoromskij2010fast} use convolution.  Consequently,
fast methods for convolution can reduce the computation time for various
problems.  Given two inputs of size $n$, a direct computation of convolution
performs at most $n(n-1)$ additions and $n^2$ multiplications. 

Over the years, \emph{fast algorithms} have been studied and used to 
compute convolution. Most fast algorithms operate in three steps: compute 
the linear combinations of both inputs, calculate the element-wise product of
those linear combinations, and then recover the result by computing the linear
combinations of the products. The first known fast algorithm is Karatsuba's
algorithm~\cite{karatsuba1995complexity}, which achieves a complexity of
$O(n^{\log_2(3)})$.  The most prominent fast algorithm employs the discrete
Fourier transform (DFT) to obtain suitable linear combinations. This fast
algorithm leverages the fast Fourier transform (FFT) to obtain the linear
combinations in $O(n\log(n))$ time~\cite{harvey2016even, harvey2019polynomial}.
The FFT-approach reduces the number of bilinear products necessary to $O(n)$,
yielding an algorithm with an overall cost of $O(n\log(n))$ instead of the
$O(n^2)$ cost incurred by the direct method.

For a convolution with two $n$-dimensional vectors, the cost and stability of
the FFT make it the method of choice. However, in many scenarios, including
signal processing and convolutional neural networks (CNN), a small {\it filter}
of size $r$ is convolved with a large vector of size $n$. A naive
application of the FFT requires $O(n\log(n))$ cost, which is worse than the
$O(nr)$ cost of the direct method when $r<\log(n)$. The use of $n/r$ FFTs of
size $O(r)$ yields a lower cost of $O(n \log(r))$. Furthermore, when $r$ is
small, the constant factors incurred by FFT, due in part to the use of complex
arithmetic, can make it uncompetitive~\cite{fialka2006fft}. Given a direct
implementation of complex arithmetic, an FFT-based convolution with $n$-dimensional
vectors requires $18n\log(2n) + O(n)$ real additions and $12n\log(2n) + O(n)$
real multiplications. For sufficiently small dimensions, the direct approach
requires less work than the FFT. While the direct approach is efficient in such
cases, other fast algorithms can obtain yet lower constant factors,
yielding practical benefits. Consider the use of convolution in CNNs. The
convolutional layer of the CNN architecture AlexNet~\cite{alex} takes 
approximately three hours, about ninety percent of the CNN's overall run time, 
to convolve $256$ images when running on a single-threaded CPU~\cite{cong}.
Even a constant factor improvement over the direct method can save minutes to
hours for this type of problem. Fast algorithms present a variety of
methods with lower cost complexities.

Beyond adaptation for small filters, another remaining challenge is the
development of efficient methods for multidimensional (especially, 2D and 3D)
convolution algorithms. Efficient algorithms for 2D and 3D convolution are
important for applications within scientific computing and CNNs. The FFT-based
approach is well-suited for the former domain, but the use of small filters in
CNNs again leaves room for further innovation. To the best of our knowledge,
the main algorithms for computing convolution in CNNs are either
matrix-multiplication~\cite{cudnn}, the FFT~\cite{fftw}, or a few variants of
Winograd's convolution algorithm~\cite{winograd,lavin,beyond}. We propose other
variants of the general Winograd formulation that are suitable for higher
dimensions, such as 2D, 3D, and 4D convolutions.

\subsection{Previous surveys and key related work}
Convolution has been studied and surveyed before in signal 
processing~\cite{survey7, survey2, survey5, survey3, survey4, survey1}. Some of
these methods have been presented as bilinear algorithms, which provide a
framework to define new algorithms for larger convolutions via a matrix nesting
by the Kronecker product and embedding using the Winograd convolution
algorithm. In addition to using the formalism of bilinear algorithms to define
these methods, we provide explicit formulations on how to generate the matrices
for the various convolution algorithms. We provide new, simple derivations for
many of the key methods, and specially address multidimensional and
small-filter convolution scenarios.

An important consideration for bilinear algorithms is the number of additions
and element-wise multiplications required to compute the algorithm. The cost of
applying the linear combinations scales quadratically to the input size.
Variations of bilinear algorithms for convolution offer trade--offs between 
the number of linear combinations and element-wise multiplications
needed~\cite{survey3}, which has subsequently been studied and optimized for
various implementations of convolution algorithms~\cite{opt}. We provide
similar tables as well as supplementary
material\footnote{\url{https://github.com/jucaleb4/Bilinear-Algorithms-for-Convolution}}
for readers to generate the matrices themselves.

With the advent of parallel computing, the scalability of convolution
algorithms is crucial for building highly efficient algorithms. The
parallelization of convolution with Sobel or Gaussian filters has been
studied~\cite{survey8, gauss}. Sobel filters are used for
edge-detection~\cite{survey8, kanopoulos1988design} and Gaussian filters are
used for reducing the noise in a signal~\cite{gauss,deng1993adaptive}. However,
a more general study of the parallel efficiency of convolution algorithms may
be useful as filters in CNNs are not restricted to the Sobel or
Gaussian variants. In the context of CNNs, a direct computation of discrete
convolution is fairly straight-forward to parallelize~\cite{krizhevsky2014one}.
Convolution can be reduced to a matrix-multiplication and fast Fourier
transform problem, both of which can leverage efficient library packages, such
as cuDNN~\cite{cudnn} for direct convolution on GPUs and FFTW~\cite{fftw} for
FFT on shared-memory machines (but also many other for GPUs, shared-memory, and
distributed-memory architectures). The family of fast convolution algorithms
from signal processing (aside from the FFT) has been largely unused for CNNs
prior to the paper by Lavin and Gray~\cite{lavin}. They propose a method based
on Winograd's formulation of convolution algorithms, although it is later noted
to be a variant of the Toom-Cook (interpolation) method~\cite{err1}.  The
Winograd-based algorithm~\cite{lavin} divides the convolution into three steps,
each step performing a sequence of matrix multiplications. Experiments on GPUs
suggest that the Winograd-based algorithm can be highly scalable for
small-filter convolution problems~\cite{lavin}. In general, both the FFT and
Winograd-based method achieve comparable execution times. When executed on
GPUs, the FFT and Winograd method achieve speed-ups of up to $4\times$ over the
direct (matrix-multiplication-based) approach for AlexNet~\cite{perform1}.
Additionally, parallel implementations~\cite{par1,par2} and specialized
hardware designs~\cite{hardware1, ifir, hardware3} have been shown to improve
the speed of convolution. We focus on the sequential arithmetic complexity and
stability of fast algorithms for convolution.

The use of linear combinations in fast algorithms leverages cancellation
of sums to reduce the number of element-wise multiplications. However, it 
may also introduce significant error for certain inputs. Since the Winograd-based
convolution~\cite{lavin}, a modified Toom-Cook method, relies on the Vandermonde
matrix, the algorithm can quickly become inaccurate for inputs of size
greater than four~\cite{lavin,winograd}. The absolute error of the Toom-Cook
method is proportional to norm of the inputs and the three matrices that
compose the bilinear algorithm~\cite{err1}. Better nodes and the use of the
Winograd convolution algorithm with polynomials of degree greater than one have
shown promising results in reducing error~\cite{err1,beyond}. In addition to
summarizing these results, we show that decomposing a large convolution into
nested smaller convolutions can result in more stable algorithms.

Given the wide array of work for convolution in signal processing and more
recently CNNs, our main contribution is to provide a comprehensive guide and
derive simple constructions for the various convolution algorithms. To do so,
we leverage the general bilinear algorithm formulation, which enables
derivation and analysis of fast algorithms using basic algebraic
transformations of matrices and tensors.

\subsection{Convolution and its variants} 
The \emph{convolution} of two continuous functions $\fun{u}$ and $\fun{v}$ is 
defined as
\begin{equation} \label{eq:convolution} 
    (\fun{u} \ast \fun{v})(t) = \int_{-\infty}^{\infty} \fun{u}(\rho) \fun{v}(t-\rho) d\rho.  
\end{equation}
Given the input vectors $\vcr{f} \in \mathbb{R}^r$ and $\vcr{g} \in
\mathbb{R}^n$ (assume $n \ge r$), the discrete convolution between $\vcr f$
and $\vcr g$ is defined as
\begin{equation} \label{eq:discrete_convolution} 
    y_k = \sum_{i} f_i g_{k-i}.
\end{equation}
We leave the starting and ending indices of the summation undefined, as
different indices in equation~\eqref{eq:discrete_convolution} produce different
variants of discrete convolution, detailed in~\cref{tab:variant_equations}. The
\emph{linear convolution}, $\vcr y = \vcr f \ast \vcr g$, is equivalent to
equation~\eqref{eq:discrete_convolution} and using bounds that keep the indices
within the range of input and output vector dimensions. \emph{Cyclic
convolution} wraps the vectors by evaluating the indices modulo $n$.
Additionally, the inputs to cyclic convolution must be of equal size ($r=n$).
Equivalently, cyclic convolution is the linear convolution of a periodic signal
$\vcr g$. When we only want the subset of elements from linear convolution,
where every element of the filter is multiplied by an element of $\vcr g$, we
can use \emph{correlation} algorithms, as introduced by
Winograd~\cite{winograd}.  We can see these are the middle $n-r+1$ elements
from a discrete convolution.  Given a filter $\vcr f \in \mathbb{R}^r$ and
input $\vcr g \in \mathbb{R}^{n+r-1}$, correlation algorithms compute $n$
outputs.

\begin{table}[htb] 
\caption{Convolution variants: different formulae for the output element $y_i$ as well as 
the whole vector $\vcr y$.}
\label{tab:variant_equations} 
\centering 
{\footnotesize
\begin{tabular}{cccc} 
\noalign{\smallskip} \hline \hline
\noalign{\smallskip} 
    & Linear & Cyclic & Correlation\\
    \hline \\
    $\begin{medsize}y_k =\end{medsize}$ & 
        $\sum\limits_{i=\text{max}(0,k-n+1)}^{\text{min}(k,r-1)} f_ig_{k{-}i}$ & 
    $\sum\limits_{i=0}^{n-1} f_ig_{k-i (\text{mod } n)}$ &
    $\sum\limits_{i=0}^{r-1} f_ig_{k+i}$ \\
    $\vcr y = $ & 
    $\tpl fn \vcr g$ &
    $\mat{C}_{\langle \vcr f \rangle} \vcr g $ &
    $\begin{medsize}\begin{bmatrix} 
      f_0&\cdots &f_{r-1} & &\\
      &\ddots & &\ddots & \\
      & &f_{0}&\cdots&f_{r-1}\\
    \end{bmatrix}\end{medsize}\vcr g$  \\
    $\vcr y =$ & 
    $\tpl gr\vcr f$ &
    $\mat{C}_{\langle \vcr g \rangle} \vcr f $ &
    $\begin{bmatrix} 
        g_0 & \dots & g_{r-1}\\
        \vdots&&\vdots\\
        g_{n-1}&\dots&g_{n+r-2}
    \end{bmatrix}\vcr f$  \\ \\ \hline
\end{tabular} }
\end{table} 
Each of the three convolution algorithms can be used to solve the other two.
Using the Matrix Interchange Theorem (\cref{thm:mit}), we can derive
correlation algorithms from linear convolution algorithms and vice versa.
Cyclic convolution can be computed with linear convolution by appending inputs
$\vcr f$ and $\vcr g$ with copies of themselves. To compute linear convolution
with cyclic convolution, the inputs $\vcr f$ and $\vcr g$ are appended with
zeros until they are each of size $n+r-1$. 

Upon inspecting the summation of linear convolution, one can see each element
of $\vcr y$ is determined by an inner product of $\vcr f$ with a different
subset of $\vcr g$. This computation can be modeled as a matrix--vector
product $\tpl fn \vcr g$, where $\tpl fn \in \mathbb{R}^{(n+r-1) \times n}$ is
a lower--trapezoidal Toeplitz matrix with the structure,
\[
    \tpl fn = \begin{bmatrix} 
    f_0 & & \\
    \vdots &\ddots \\ 
    f_{r-1} & & f_0 \\ 
    &\ddots &\vdots \\ 
    & & f_{r-1} 
    \end{bmatrix}.
\]
Like linear convolution, a cyclic convolution is a
series of inner products between $\vcr f$ and different subsets of
$\vcr g$. The main difference is any summation past the last element
of $\vcr g$ ``wraps'' back to the start because of the
modular index, whereas in linear convolution the summation will
terminate. Consequently, we can denote cyclic convolution using
the matrix--vector product
$\mat{C}_{\langle \vcr f \rangle} \vcr g$, where $\mat{C}_{\langle
\vcr f \rangle} \in \mathbb{R}^{n \times n}$ is a circulant matrix
with the structure, 
\begin{align*}
    \mat{C}_{\langle \vcr f \rangle} = 
    \begin{bmatrix}
        f_0 & f_{n-1} & \cdots & f_1 \\
        f_1 & \ddots  & &\vdots  \\
        \vdots & & \ddots &  f_{n-1}\\
        f_{n-1} & \cdots &  f_1 & f_0
    \end{bmatrix}.
\end{align*}

Expressing convolution as a matrix-vector product with a structured matrix
allows the use of computational and analytical techniques for such matrices.
For example, given a square Toeplitz matrix of size $n$, the
determinant~\cite{pan2012structured}, as well as LU and QR
decompositions~\cite{bojanczyk1986qr} can be computed in $O(n^2)$ as compared
to the $O(n^3)$ needed for arbitrary
matrices~\cite{ye2016every,pan2012structured}.  Structured matrix inversion can
be computed in $O(n \log^2 (n))$ time~\cite{kumar1985fast,
bitmead1980asymptotically}.  Many of these fast algorithms, as we will see
later in this paper, can be derived by connections to polynomial algebra and
exploiting the structure of the computation.  We summarize various types of
convolution and ways to express them as products of a structured matrix and a
vector in~\cref{tab:variant_equations}.

Finally, we consider higher dimensional convolution methods.
Multidimensional convolution corresponds to convolving along each mode
of the inputs. Given a 2D filter $\mat F \in \mathbb{R}^{r \times r}$ and
input $\mat G \in \mathbb{R}^{n \times n}$, their linear convolution is
computed by 
\begin{equation}\label{eq:multi_convolution} 
    y_{lm} = 
    \sum\limits_{i = \text{max}(0,l-n+1)}^{\text{min}(l,r-1)}
    \sum\limits_{j = \text{max}(0,m-n+1)}^{\text{min}(m,r-1)}
    f_{ij} \cdot g_{l-i,m-j}.
\end{equation}

\subsection{Paper overview}
We survey different applications of convolution in~\cref{sec:applications}.
We formulate the convolution algorithm as a bilinear algorithm in~\cref{sec:general}, following Pan's formalism for 
matrix-multiplication~\cite{bilinear}. Then, we present
specific implementations of fast algorithms in~\cref{sec:fastImp}, \cref{sec:fastMod}, 
\cref{sec:fastAlg}, and~\cref{sec:adapt}.  We leverage our general formulation
of fast convolution algorithms to quantify their cost complexity
in~\cref{sec:costs}. We derive bounds on the numerical stability of
bilinear algorithms (providing a simplified summary of previous results
from~\cite{err1}) 
and provide solutions to reduce the error in~\cref{sec:stability}. We conduct
numerical experiments on the stability of a variety of 1D and multidimensional
convolution algorithms in~\cref{sec:experiments}. Finally, we present open
questions in~\cref{sec:future}.

\section{Problems and applications of convolution}\label{sec:applications} 
Convolution is a key component for many scientific and engineering
problems, such as signal processing, partial differential equations,
and image processing. In the following section, we examine how linear
discrete convolution and correlation convolution algorithms are used in
a variety of fields.

\subsection{Signal processing}\label{sec:signal_processing} 
One of the most important tasks in digital signal processing is the filtering
of a long signal, represented by a sequence of real and complex numbers.  The
filtering of the signal is calculated by a digital filter~\cite{survey3}, which
produces a new signal called an output sequence. FIR filters, or
finite-impulse-response filters, are digital filters that capture the strength
of the incoming signal for only a finite period of time~\cite{filter2}. The
computation of the output sequence from an FIR filter can be synthesized by
discrete convolution~\cite{gold1969direct}. The ubiquity of FIR filters within
domains such as noise removal in EKGs~\cite{filter1}, image processing for
texture mapping~\cite{texture}, and mobile communications~\cite{mobile} have
led to the development of highly efficient algorithms for 1D
discrete convolution, such as new nesting schemes~\cite{ifir,extending} and
the Fermat number transform~\cite{fnt}. 

\subsection{Integer multiplication} \label{sec:integer} 
Let $a$ and $b$ be two $n$-digit integers. The value of
the two integers can be rewritten as $a= \sum\limits_{i=0}^{n-1}
a_i \cdot 10^i$ and $b = \sum\limits_{i=0}^{n-1} b_i \cdot 10^i$, where
$a_i$ and $b_i$ are the individual digits for integers $a$ and $b$ respectively.
A direct computation of the product $a \times b$ can be formulated as
\begin{equation}
\label{eq:int_to_conv}
    a \times b = \sum\limits_{k=0}^{2n-2} 
    \sum\limits_{i=\text{max}(0,k-n+1)}^{\text{min}(k,n-1)} (a_i \cdot b_{k-i}) 10^k.
\end{equation}
The similarity of equation~\eqref{eq:int_to_conv} to
equation~\eqref{eq:discrete_convolution} allows integer multiplication to be
viewed as discrete convolution and vice versa.

\subsection{Numerical methods for differential equations} \label{sec:pde}
Within physics, chemistry, and engineering, many numerical PDE solvers are
based on determining the solution to continuous convolution equations.  For
example, integral equations for initial and boundary value problems of linear
differential equations seek to describe the solution $v$, which arises in
$u\ast v$ with the integration domain described by boundary conditions, where
$u$ is the Green's function of the differential
operator~\cite{kress1989linear}.

These problems are sometimes reduced to multidiscrete convolution, especially
when regular grids are used for discretization, often yielding 3D discrete
convolution problems. Iterative methods for solutions to convolution equations
leverage repeated application of the convolution operator, yielding a series of
discrete convolutions.  Techniques for fast convolution algorithms, such as the
discrete Fourier transform, also provide a way of solving convolution equations
directly.  Such regular-grid-based solvers are prevalent across a variety of
major numerical PDE applications in scientific computing.  For example, they
are used for acoustic scattering problems~\cite{BRUNO200180}, for long-range
interactions in molecular dynamics (particle-mesh Ewald
method)~\cite{darden1993particle}, and within quantum chemistry for electronic
structure
calculations~\cite{harrison2004multiresolution,khoromskij2008tensor,khoromskij2010fast}
and dynamics~\cite{pde1}.

Multidimensional convolution is a particularly important computational
primitive in methods for electronic structure calculations, which approximately
solve the many-body Schr\"odinger equation.  Standard formulations based on the
Hartree-Fock and Kohn-Sham equations, as well as Green's function
methods~\cite{fetter2012quantum}, involve multidimensional continuous
convolution. Solving these equations is generally done either using the
discrete multidimensional convolution~\cite{khoromskij2008tensor} or solving
them implicitly via Fourier transformations. Fast multidimensional convolution
algorithms (discussed in detail in Section~\ref{sec:lowrank}) have been
employed to approximately compute these convolutions with asymptotically less
cost~\cite{khoromskij2008tensor, khoromskij2010fast, khoromskij2009multigrid}.

\subsection{Convolutional neural networks} 
\label{sec:image_processing} 
Convolutional neural networks (CNNs) are a type of deep neural network that
uses low--level information, such as shapes and lines, to identify coarser
grained patterns, such as performing object recognition in image processing.
To gather local information, CNNs use many convolutions to compare subsets of
the data with a kernel (or filter). The success of CNNs in image
recognition~\cite{lenet,alex} catalyzed the recent rapid expansion of research
in deep learning~\cite{cnn1,cnn2,cnn3,cnn4}.  Convolution is the dominating
cost~\cite{sparse1} in CNNs, and thus it is desirable to improve its efficiency
to decrease both the training and inference time.

We now formally define how the series of 2D convolutions are performed for
image--processing based CNNs, which uses the \textit{correlation} form of
convolution. A CNN is associated with a set of $K$ filters of size $S \times R$
stored in the tensor $\tsr{F}$. An input to a CNN will be a set of $N$ images
stored in the tensor $\tsr{G}$. Each filter and image has $H$ channels, such as
the RGB channels for color images.  The convolutions are summed over the
channels and stored in $\tsr Y$,
\begin{equation} \label{eq:cnn_equation} 
    y_{ikxy} =
    \sum\limits_{c=1}^H \sum\limits_{v=1}^R \sum\limits_{u=1}^S
    f_{kcuv}\cdot g_{i,c,x+u,y+v}.
\end{equation}
In equation~\eqref{eq:cnn_equation}, the variable $i$ is the index for which of
the $N$ images we are convolving, and the variable $k$ denotes which of the $K$
filters is being used. Popular methods to efficiently
compute~\eqref{eq:cnn_equation} include casting the problem as a
matrix--multiplication~\cite{cudnn}, employing the FFT~\cite{fftw}, or
utilizing Winograd's minimal filtering method~\cite{lavin,winograd}.

The best choice of convolution algorithm depends on the parameters of the CNN
model. For CNNs with larger filters (approaching $r \geq 10$), the FFT is
competitive.  Currently, it is most popular to employ deeper (many layered)
CNNs with small filters ($r=2-4$)~\cite{you2019large}, in which case, standard
Winograd--based approaches work well. Even for slightly larger filter sizes,
different approaches become favorable, however.  For instance, commonly used
variants of the Winograd's convolution algorithm~\cite{lavin} suffer from
numerical instability due to large linear combination coefficients. As deep
learning research adopts approximate techniques such as
quantization~\cite{courbariaux2014training} (low--precision arithmetic), it
will be advantageous to have algorithms that balance numerical efficiency and
accuracy to prevent further perturbation to the computation. With the
proliferation of deep learning on GPUs and TPUs, it is desirable to cast
convolution in the language of linear algebra. Consequently, we present matrix
formulations for the various fast convolution algorithms. With these
formulations, our main goal is to illustrate the different algorithm choices
for computing convolution and to understand their complexity and numerical
accuracy.

\subsection{Multidimensional data analysis} \label{sec:cosmos}
In addition to CNNs, many other data--driven scientific discoveries rely on
convolution to better understand data from experimental observations and
computational simulations. As briefly listed in the beginning of the paper,
these applications include acoustics, geophysics, molecular simulation, and
quantum chemistry. Here, we provide some motivating examples of recent work
on application of CNNs in different scientific domains.

Within cosmology, the problem of learning parameters about
galaxies from distributions of matter previously relied on manually tuned
statistical measures using correlation functions~\cite{CosmoFlow, stv632}.
Recent experimental results~\cite{Mustafa2019, CosmoFlow} have shown that CNNs
can outperform the manually set measures, especially when analyzing noisy 
cosmological datasets~\cite{Schmelzle}. Despite the robustness of CNNs within
cosmology, the training time can take up to twenty days of run time when using
the TensorFlow framework~\cite{CosmoFlow}.

To improve performance, a CNN's parameters, such as its filter size, pooling
strategies, and stride length, must be  manually tuned or optimized over some
search space~\cite{elsken2018neural, balaprakash2019scalable}. Consequently,
the parameters of a CNN vary from application to application.  For example, in
past work on tumor classification, the manually constructed CNN contains a 1D
convolutional layer with 128 filters of kernel size
$r=10,20$~\cite{balaprakash2019scalable}, whereas cosmological parameter
estimation involves a 3D convolution with filters of size $r \times r \times r$
for $r=2,3,4$~\cite{CosmoFlow}.  The diversity of uses of CNNs in scientific
applications, including different dimensionality and filter sizes, motivates
exploration of general families of fast convolution algorithms to improve
performance over standard Winograd-based schemes and FFT.

\section{Bilinear algorithm representation for convolution} \label{sec:general}
A direct computation of a 1D linear convolution requires $O(nr)$ additions and
multiplications.  Faster algorithms can generally be represented using the
framework of bilinear algorithms~\cite{bilinear}. The linear convolution of 1D
vectors $\vcr f \in \mathbb{R}^r$ and $\vcr g \in \mathbb{R}^n$ can be defined
by a \emph{bilinear function},
\begin{equation} \label{eq:bi_func}
    \vcr y = \mathcal{F}_{\tsr T}(\vcr f,\vcr g), \text{ where } y_k = \sum\limits_{i,j}t_{ijk}f_ig_j, 
    \text{ with } t_{ijk}=\begin{cases} 1 : i+j-k=0 \\ 0 : \text{otherwise}\end{cases}.
\end{equation}
A CP decomposition~\cite{tensor1} of the tensor $\tsr T$, given by matrices 
$\mat A \in \mathbb{C}^{r \times R}$, $\mat B \in \mathbb{C}^{n \times R}$, and 
$\mat C \in \mathbb{C}^{(n+r-1) \times R}$ via
\begin{equation} \label{eq:bilinear_tensor} 
    t_{ijk} = \sum\limits_{l=0}^{R-1}a_{il}b_{jl}c_{kl}, 
\end{equation}
specifies a \emph{bilinear algorithm}~\cite{bilinear} for computing $\mathcal{F}_{\tsr T}$.
\begin{definition}[Bilinear algorithm]
    A bilinear algorithm $(\mat A, \mat B, \mat C)$ specifies an algorithm
    for computing $\vcr y = \mathcal{F}_{\tsr T}(\vcr f, \vcr g)$ via
    \begin{equation} 
    \label{eq:bilinear_summation} 
        y_k =
        \sum\limits_{l=0}^{R-1}c_{kl}\Big(\sum\limits_{i=0}^{r-1} a_{il}f_i\Big)
    \Big(\sum\limits_{j=0}^{n-1} b_{jl}g_j\Big), \text{ i.e., }  \ \  
    \vcr y = \mat C\bigg[(\mat A^\mathsf T\vcr f) \odot (\mat B^\mathsf T\vcr g) \bigg], \end{equation}
    where the value $R$ is the \emph{bilinear rank} of the algorithm.
\end{definition}
Matrices $\mat A$ and $\mat B$ specify linear combinations for inputs $\vcr f$
and $\vcr g$ respectively, which serve as respective inputs to a set of $R$
products of the two sets of linear combinations. The matrix $\mat C$ takes
linear combinations of these products to obtain each entry of the output $\vcr
y$. We refer to the multiplication by matrices $\mat A$ and $\mat B$ as
\emph{encoding} and the multiplication by matrix $\mat C$ as \emph{decoding}.
When an algorithm is applied recursively many times, the bilinear rank $R$
plays a key role, since the rank determines the number of recursive calls
needed. The asymptotic complexity of the recursive bilinear algorithm usually
depends on $R$ and not on the particular structure of the matrices $(\mat
A,\mat B,\mat C)$.

Given a filter of size $r$ and input of size $n$, a direct computation of
linear convolution has a rank of $R = nr$. Similarly, for filter size $r$ and
output size $n$, a direct computation of correlation convolution has bilinear
rank of $R = nr$. However, algorithms with bilinear rank of $n+r-1$ exist for
both problems. The optimality of this bilinear rank has been proven by
Winograd~\cite{winograd}.
\begin{theorem} \label{thm:winograd_minimal}
    The minimum rank of a correlation convolution algorithm with 
    filter of size $r$ and output of size $n$ is $n+r-1$.
\end{theorem}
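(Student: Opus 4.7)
The plan is to identify the bilinear rank of any correlation algorithm with the CP rank of the correlation tensor $\tsr{T} \in \mathbb{R}^{r \times (n+r-1) \times n}$ defined by $t_{ijk} = 1$ when $j = i+k$ and $0$ otherwise, since \eqref{eq:bilinear_tensor} exhibits a bilinear algorithm $(\mat{A}, \mat{B}, \mat{C})$ of rank $R$ as exactly a rank-$R$ CP decomposition of $\tsr{T}$. The theorem will then follow from matching $R \geq n+r-1$ and $R \leq n+r-1$.

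For the lower bound I would use the standard fact that the CP rank of a tensor is bounded below by the matrix rank of each of its mode-wise flattenings. The informative choice here is the middle mode: the flattening $\mat{T}_{(2)} \in \mathbb{R}^{(n+r-1) \times rn}$ has, in row $j$, an entry of $1$ in every column indexed by a pair $(i,k)$ with $i + k = j$, and zeros elsewhere. Because the support of row $j$ is precisely the set of $(i,k)$ pairs summing to $j$, the row supports are pairwise disjoint and each is nonempty for $0 \leq j \leq n+r-2$, so the rows are linearly independent and $\mathrm{rank}(\mat{T}_{(2)}) = n+r-1$.

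For the matching upper bound I would invoke the Matrix Interchange Theorem (\cref{thm:mit}), which transfers a rank-$R$ bilinear algorithm between correlation and the linear convolution of sizes $r$ and $n$ without changing $R$. On the linear convolution side I would exhibit the Toom--Cook construction: choose any $n+r-1$ distinct scalars $x_0, \ldots, x_{n+r-2}$, take $\mat{A}$ and $\mat{B}$ as the Vandermonde matrices with $a_{il} = x_l^i$ and $b_{jl} = x_l^j$ so that $\mat{A}^\mathsf{T} \vcr{f}$ and $\mat{B}^\mathsf{T} \vcr{g}$ are the evaluations $F(x_l)$ and $G(x_l)$ of the polynomials $F(x) = \sum_i f_i x^i$ and $G(x) = \sum_j g_j x^j$, and let $\mat{C}$ be the inverse Vandermonde matrix performing Lagrange interpolation to recover the coefficients of the degree-$(n+r-2)$ product $FG$ from its values at the nodes. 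The number of pointwise products is exactly $n+r-1$, matching the lower bound.

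The hard part is really the lower bound direction: in general the matrix-rank-of-flattening bound on CP rank is far from tight (matrix multiplication being the canonical example). What makes it tight here is that $\mat{T}_{(2)}$, viewed as a $\{0,1\}$-matrix whose rows have disjoint nonempty supports, has rank exactly equal to its number of rows $n+r-1$. The remaining subtlety is confirming that the matrix interchange step really preserves the bilinear rank when moving between correlation and linear convolution, but this is already established earlier in the paper via \cref{thm:mit}.
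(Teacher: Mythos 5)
Your proposal is correct, but it is worth noting that the paper does not actually prove \cref{thm:winograd_minimal} at all --- it states the result and defers to Winograd's original work, then uses \cref{thm:mit} only to transfer the bound to linear convolution in \cref{cor:min_rank}. Your argument supplies a self-contained proof in the paper's own bilinear-algorithm language, and both halves check out. The lower bound via the mode-$2$ flattening is sound: a rank-$R$ CP decomposition $t_{ijk}=\sum_l a_{il}b_{jl}c_{kl}$ writes the flattening $\mat{T}_{(2)}$ as a sum of $R$ rank-one matrices, and since every column $(i,k)$ of $\mat{T}_{(2)}$ has its single $1$ in row $j=i+k$, the $n+r-1$ rows have pairwise disjoint, nonempty supports (every $j\in\{0,\dots,n+r-2\}$ is realized as $i+k$ with $0\le i\le r-1$, $0\le k\le n-1$), forcing $R\ge n+r-1$. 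The upper bound via Toom--Cook evaluation--interpolation at $n+r-1$ distinct nodes followed by \cref{thm:mit} is exactly the construction the paper develops in its Sections 3--4, and the interchange manifestly preserves $R$ since it only permutes the three factor matrices. The one caveat worth flagging: Winograd's original theorem is a statement about the multiplicative complexity of arbitrary arithmetic algorithms, which is strictly stronger than a lower bound on the bilinear (CP) rank; your flattening argument establishes only the latter, but that is precisely what the paper's definition of ``rank of a correlation convolution algorithm'' requires, so your proof fully covers the statement as written. What your route buys is a short, purely linear-algebraic derivation that fits the paper's tensor formalism; what the citation to Winograd buys is the stronger non-bilinear lower bound.
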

A proof of the above theorem is presented in~\cite{winograd}. Winograd also
shows that by casting the bilinear algorithm to a trilinear algorithm, linear
convolution algorithms can be derived from correlation algorithms by swapping
variables, later defined as the \emph{matrix interchange}~\cite{err1,survey3}. 
We provide an alternative proof for the matrix interchange by simply swapping
the indices of the tensor $\tsr T$. 
\begin{theorem}[Matrix Interchange] \label{thm:mit}
    Let the bilinear algorithm for linear convolution $\vcr f$ and $\vcr g$ be defined
    as $\mat{C}\big( (\mat{A}^\mathsf T\vcr f) \odot (\mat{B}^\mathsf T\vcr g) \big)$. The correlation algorithm with
    output size $n$ is
    \begin{equation} \label{eq:matexchange}
        \mat B \Big( (\mat{A}^\mathsf T\vcr f) \odot (\mat{C}^\mathsf T\vcr g) \Big).
    \end{equation}
\end{theorem}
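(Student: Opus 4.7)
The plan is to view both convolution variants through the bilinear tensor lens established in \eqref{eq:bi_func}, and observe that correlation is obtained from linear convolution by a mere permutation of tensor modes; the bilinear algorithm then transforms correspondingly under this permutation.

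First I would write down the tensor for the correlation operation. Since the correlation output satisfies $y_k = \sum_i f_i g_{k+i}$ for $0 \le k \le n-1$, the associated tensor $\tsr T' \in \mathbb{R}^{r \times (n+r-1) \times n}$ has entries $t'_{ijk} = 1$ if $j = k+i$ and $0$ otherwise. Comparing with the linear convolution tensor $\tsr T \in \mathbb{R}^{r \times n \times (n+r-1)}$ defined in \eqref{eq:bi_func}, where $t_{ijk}=1$ iff $i+j=k$, one sees immediately that $t'_{ijk} = t_{ikj}$, i.e. $\tsr T'$ is obtained from $\tsr T$ by transposing the second and third modes (and this is consistent with the dimensions, since the second mode of $\tsr T'$ has size $n+r-1$, matching the third mode of $\tsr T$, and vice versa).

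Next I would transfer the CP decomposition under this mode swap. Given the hypothesis that $(\mat A, \mat B, \mat C)$ is a bilinear algorithm for linear convolution, equation~\eqref{eq:bilinear_tensor} provides $t_{ijk} = \sum_l a_{il} b_{jl} c_{kl}$. Substituting into $t'_{ijk} = t_{ikj}$ gives
\begin{equation*}
    t'_{ijk} \;=\; \sum_{l=0}^{R-1} a_{il}\, b_{kl}\, c_{jl} \;=\; \sum_{l=0}^{R-1} a_{il}\, c_{jl}\, b_{kl},
\end{equation*}
which is precisely a CP decomposition of $\tsr T'$ with factor matrices $(\mat A, \mat C, \mat B)$ in the first, second, and third modes respectively. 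By the definition of a bilinear algorithm, the corresponding algorithm for correlation evaluates
\begin{equation*}
    y_k \;=\; \sum_{l=0}^{R-1} b_{kl} \Big(\sum_{i=0}^{r-1} a_{il} f_i\Big)\Big(\sum_{j=0}^{n+r-2} c_{jl} g_j\Big),
\end{equation*}
i.e., $\vcr y = \mat B\big[(\mat A^\mathsf T \vcr f) \odot (\mat C^\mathsf T \vcr g)\big]$, matching \eqref{eq:matexchange}.

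The derivation is essentially bookkeeping once the mode-swap identity $t'_{ijk} = t_{ikj}$ is noted, so there is no deep obstacle; the only care required is to verify that the dimensions of the factor matrices line up with their new roles — namely that $\mat C$, originally decoding to a vector of length $n+r-1$, now serves as the encoding matrix for the correlation input $\vcr g \in \mathbb{R}^{n+r-1}$, while $\mat B$, originally encoding a length-$n$ input, now decodes to the length-$n$ correlation output. These dimensions match exactly, confirming that the interchange is well defined.
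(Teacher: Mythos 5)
Your proof is correct and follows essentially the same route as the paper: both identify the correlation tensor $t^{\text{corr}}_{ijk}=1$ iff $i-j+k=0$, observe the mode-swap identity $t^{\text{corr}}_{ijk}=t_{ikj}$, and transfer the CP decomposition to obtain the factor ordering $(\mat A,\mat C,\mat B)$. Your added dimension-consistency check is a nice touch but does not change the argument.
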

\begin{proof}
    From equation~\eqref{eq:bi_func}, the tensor $\tsr T$ in
    $\sum\limits_{ij}t_{ijk}f_ig_j$ satisfies $t_{ijk} = 1$ if $i+j-k=0$ and
    otherwise $t_{ijk}=0$. The bilinear function computing correlation can be
    expressed via tensor $\tsr T^{\text{corr}}$ as
    \[
        y_k = \sum\limits_{ij}t^{\text{corr}}_{ijk}f_ig_j=\sum\limits_{i=0}^{r-1}
        f_ig_{k+i}
    \] 
    with $t^{\text{corr}}_{ijk}=1$ if $i-j+k=0$, and consequently,
    \[
        t^{\text{corr}}_{ijk}=t_{ikj}.
    \] 
    Therefore, given a bilinear algorithm $(\mat A,\mat B,\mat C)$ to compute
    linear convolution, we obtain a bilinear algorithm $(\mat A,\mat C,\mat B)$
    for the correlation algorithm, since
    \[
        t^{\text{corr}}_{ijk}=t_{ikj}=\sum_{l=0}^{R-1}a_{il}b_{kl}c_{jl}.
    \]
\end{proof}
The conversion between correlation and linear convolution algorithm preserves
the number of element-wise multiplications, and subsequently the rank as well. 
\begin{corollary} \label{cor:min_rank}
    The minimum rank of a linear convolution with a filter of size $r$ and
    input of size $n$ is $R = n+r-1$.
\end{corollary}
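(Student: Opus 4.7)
The plan is to deduce the corollary directly from Theorem~\ref{thm:winograd_minimal} together with the Matrix Interchange Theorem~\ref{thm:mit}. The key observation is that matrix interchange is an involution on bilinear algorithms that preserves the number of bilinear products, so the minimum ranks of the two problems must coincide once the dimensions are matched up correctly.

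First I would handle the lower bound. Suppose, for contradiction, that there exists a linear convolution algorithm $(\mat A, \mat B, \mat C)$ with filter in $\mathbb{R}^r$ and input in $\mathbb{R}^n$ of rank $R < n+r-1$. Here $\mat A \in \mathbb{R}^{r\times R}$, $\mat B \in \mathbb{R}^{n\times R}$, and $\mat C \in \mathbb{R}^{(n+r-1)\times R}$. Applying Theorem~\ref{thm:mit} produces the bilinear algorithm $(\mat A, \mat C, \mat B)$, which computes the correlation of a filter of size $r$ with an input of size $n+r-1$ to produce an output of size $n$. This correlation algorithm has the same rank $R < n+r-1$, directly contradicting Theorem~\ref{thm:winograd_minimal}, which asserts that any correlation algorithm with filter size $r$ and output size $n$ must have rank at least $n+r-1$. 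Hence $R \geq n+r-1$.

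For the matching upper bound, I would invoke the other direction of the matrix interchange. Theorem~\ref{thm:winograd_minimal} guarantees a correlation algorithm $(\mat A, \mat B', \mat C')$ of rank exactly $n+r-1$ for filter size $r$ and output size $n$. Applying Theorem~\ref{thm:mit} in reverse (swapping the roles of the second and third factor matrices) yields a linear convolution algorithm $(\mat A, \mat C', \mat B')$ for filter size $r$ and input size $n$ with rank $n+r-1$. Combined with the lower bound, this establishes that the minimum rank is exactly $n+r-1$.

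The proof is essentially a bookkeeping exercise once Theorems~\ref{thm:winograd_minimal} and~\ref{thm:mit} are in place; the only subtle point, and the one I would take care to verify, is that the dimensions match up correctly after the interchange, namely that the derived correlation problem has output size $n$ (not $n+r-1$) so that Winograd's lower bound applies in the exact form needed for the contradiction.
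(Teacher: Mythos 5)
Your proposal is correct and follows essentially the same route as the paper: the paper deduces the corollary from Theorem~\ref{thm:winograd_minimal} via the rank-preserving matrix interchange of Theorem~\ref{thm:mit}, exactly as you do, merely stating it in one sentence rather than spelling out both the lower-bound contradiction and the upper-bound construction. Your explicit check that the derived correlation problem has output size $n$ is the right detail to verify, and it holds since $\mat B$ has $n$ rows.
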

We now present various bilinear algorithms that achieve the minimal rank.

\section{Convolution using polynomial interpolation} \label{sec:fastImp} 
Given a discrete set of points $\vcr x$ with corresponding values $\vcr y$,
interpolation derives the polynomial $\fun v$ the fits the values of $\vcr y$
as accurately as possible. Given $n$ points, a unique $n-1$ degree polynomial
$\fun v$ exists that satisfies $v(x_i) = y_i$ for all $i$~\cite{Heath}. 

Recall from~\cref{sec:integer} that polynomial multiplication is equivalent
to linear convolution. Let the vectors $\vcr f$ and $\vcr g$ be the coefficients for a
degree $r-1$ polynomial $\fun p$ and degree $n-1$ polynomial $\fun{q}$
respectively. The linear convolution of $\vcr f$ and $\vcr g$ is equivalent to
the coefficients of the polynomial product $\fun v = \fun p \fun q$. By viewing
linear convolution as polynomial multiplication, we can apply a family of fast
algorithms to convolution, one of which is based on interpolation. The
intuition behind the interpolation approach is as follows.
First, we multiply the values of $\fun{p}$ and $\fun{q}$ at
$n+r-1$ discrete nodes. These products are equivalent to $\fun{v}$ at those
same $n+r-1$ points. We then interpolate on these values to compute the
coefficients for polynomial $\fun{v}$. By carefully selecting the nodes and the
basis for interpolation, we can derive algorithms that are both stable and
compute linear convolution in asymptotically less time.

Let the matrix $\mat V \in \mathbb{C}^{R \times R}$ be the Vandermonde matrix with
$R = n+r-1$ distinct nodes. The bilinear algorithm's encoding matrices $\mat A
\in \mathbb{C}^{r \times R}$ and $\mat B \in \mathbb{C}^{n \times R}$ are
defined by keeping the first $r$ and $n$ rows of $\mat V^\mathsf T$,
respectively~\cite{winograd, survey3, opt}. The decoding matrix $\mat C \in
\mathbb{C}^{R \times R}$ is then given by $\mat V^{-1}$.  This construction of
the matrices $\mat{A}$, $\mat{B}$, and $\mat{C}$ creates a bilinear algorithm
with rank $R = n+r-1$.

\subsection{Karatsuba's Algorithm} 
In the late 1950s, Kolmogorov conjectured that integer 
multiplication~\eqref{eq:int_to_conv} had a cost complexity of $\Omega(n^2)$.
Karatsuba refuted the conjecture by developing an algorithm running in
$O(n^{\text{log}_2(3)})$ time~\cite{karatsuba1995complexity}. 
Karatsuba's algorithm reuses the previous element-wise multiplications
to compute the middle term of a two-digit integer multiplication problem,
\begin{align*}
\label{eq:kara_insight}
    a \times b &= \sum\limits_{k=0}^2
    \sum\limits_{i=\text{max}(0,k-1)}^{\text{min}(k,1)} (a_i \cdot b_{k-i})10^k\\
    &= (a_1 \cdot b_1)10^2 + (a_1\cdot b_0 + a_0 \cdot b_1)10 + (a_0 \cdot b_0)\\
    &= (a_1 \cdot b_1)10^2 + \big( (a_1\cdot b_1 + a_0\cdot b_0) -
    (a_0-a_1)(b_0-b_1)\big) 10 + (a_0 \cdot b_0).
\end{align*}
With the reformulation, the multiplication now only requires three unique
element-wise multiplications instead of four. When the inputs have more digits
than two, equation~\eqref{eq:kara_insight} can be applied by breaking the
integer into two smaller integers and recursively computing each element-wise
multiplication. By reducing the problem by a factor of two and making three
recursive calls, the asymptotic cost of this algorithm for $n$-digit integer
multiplication is $T(n) = 3T(n/2) + O(n) = O(n^{\log_2(3)})$. 

Karatsuba's algorithm operates in three distinct steps: take linear combinations
of the input, compute the element-wise multiplications, and compute the linear
combinations of the products. The combination of these three steps is captured
by the bilinear algorithm,
\begin{equation}
\label{eq:karatsubaLA}
    \begin{bmatrix}1&0&0\\1&-1&1\\0&0&1\end{bmatrix} \Bigg(
        \begin{bmatrix}1&0\\1&-1\\0&1\end{bmatrix}
        \begin{bmatrix}a_0\\a_1\end{bmatrix} \odot
        \begin{bmatrix}1&0\\1&-1\\0&1\end{bmatrix}
        \begin{bmatrix}b_0\\b_1\end{bmatrix} 
    \Bigg).
\end{equation}
This bilinear algorithm can be viewed as an interpolation-evaluation problem
using the nodes $0,1,\text{ and } \infty$. The use of the node $\infty$ will
be explained in the next section on Toom-Cook algorithms. Toom-Cook algorithms
encompass a family of fast algorithms, such as Karatsuba's algorithm, that
operate using a more general bilinear algorithm formulation. 

\subsection{The Toom-Cook method} 
\label{sec:toomcook}
Soon after the publication of Karatsuba's algorithm, Toom developed a
generalized algorithm for any input size $k$~\cite{toom1963complexity}. Cook's
Ph.D. thesis formalized Toom's algorithm into what is now known as the
Toom-Cook method~\cite{cook1969minimum}, which is an explicit definition of the
interpolation approach from the beginning of~\cref{sec:fastImp}. 

The designer of the Toom-Cook method can freely choose the basis and nodes.
Regardless of the basis, both the input and output must be represented in the
monomial basis, since convolution is equivalent to polynomial multiplication 
only in this basis. The Toom-Cook algorithm can be defined by the
Lagrangian basis~\cite{bodrato,zan,survey5,survey3}. Using this basis, the
polynomial multiplication, $\fun{v} = \fun{p}\fun{q}$, is computed by the summation,
\begin{equation} \label{eq:lagrange} 
    v(x) = \sum\limits_{j=0}^{r+n-2}\prod_{i = 0,i \ne j}^{r+n-2} 
    p(x_j) \cdot q(x_j) \frac{(x-x_i)}{(x_j-x_i)},  
\end{equation}
where $x_0,\ldots,x_{r+n-2}$ are the set of $r+n-1$ unique nodes. 
Equation~\eqref{eq:lagrange} can be rewritten as the multiplication by the
inverse Vandermonde matrix,
\begin{equation} \label{eq:vander_from_lagrange} 
    \vcr y = \mat V^{-1} \begin{bmatrix} p(x_0)\cdot q(x_0) \\ \vdots \\ 
    p(x_{r+n-2}) \cdot q(x_{r+n-2}) \end{bmatrix}.
\end{equation}
By defining the matrices $\mat A$ and $\mat B$ by the truncated Vandermonde matrix and
matrix $\mat{C}$ by the inverse Vandermonde matrix, as explained in the
beginning of~\cref{sec:fastImp}, the bilinear algorithm
$(\mat{A},\mat{B},\mat{C})$ computes the Toom-Cook algorithm.
A common choice of nodes are small integer values, such as 
$0,1,-1,2,-2,\ldots$. Small integers can limit the magnitude of
the scalars in the Vandermonde matrix. 

As the number of nodes increases, the number of non--zeros in the Vandermonde
matrix grows quadratically. The number of non--zeros in the $\mat{A}$,
$\mat{B}$, and $\mat{C}$ matrices can be reduced by selecting $\infty$ as a
node~\cite{survey5,opt}. The $\infty$ node computes the product between the
leading terms of inputs $\vcr f$ and $\vcr g$. To use the $\infty$ node,
the last row for each of the decoding matrices, $\mat A$ and $\mat B$, is set
to all zeros except for the last entry, which is set to $1$. Similarly, the
decoding matrix is set to $\mat C = \mat{\tilde{V}}^{-1}$, where
$\mat{\tilde{V}}$ is the original Vandermonde matrix with the last row set to
all zeros, and the last entry is set to $1$. The Karatsuba 
algorithm~\eqref{eq:karatsubaLA} is a Toom-Cook algorithm with the nodes
$0$,$1$, and $\infty$.

\subsection{Discrete Fourier transform} \label{sec:transforms} 
The use of integer nodes creates Vandermonde matrices that are ill-conditioned,
limiting the Toom-Cook method to small linear convolutions. Instead, the set of
nodes can be defined by the first $n$ non--negative powers of the primitive $n$th
primitive root of unity, $\omega_{(n)} = \text{exp}(-2\pi i/n)$. The use of the
powers of $\omega_{(n)}$ as nodes generates a Vandermonde matrix that is
equivalent to the discrete Fourier matrix, $\mat{D}^{(n)} \in \mathbb{C}^{n
\times n}$ with $d^{(n)}_{mk}=\omega_{(n)}^{mk}$.
The inverse of the discrete Fourier matrix is simply $\mat{D}^{(n)}{}^{-1} =
(1/n)\mat{D}^{(n)}{}^*$, so $\kappa(\mat{D}^{(n)})=1$. The ideal conditioning
of this matrix enables improved stability relative to Toom-Cook methods with
other choices of nodes. The use of the discrete Fourier matrix and its inverse
also defines bilinear algorithms for cyclic convolution~\cite{survey3}.
\begin{theorem}[Discrete cyclic convolution theorem] \label{thm:dct_conv_thm}
    The bilinear algorithm \\$(\mat{D}^{(n)}{}^\mathsf T, \mat{D}^{(n)}{}^\mathsf T, \mat D^{(n)}{}^{-1})$
    computes cyclic convolution.
\end{theorem}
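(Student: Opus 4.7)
The plan is to unwind the bilinear algorithm formula and match its $k$-th output entry against the cyclic convolution $y_k = \sum_i f_i g_{(k-i)\bmod n}$ from \cref{tab:variant_equations}. First I would observe that $\mat{D}^{(n)}$ is symmetric, so $\mat{D}^{(n)\mathsf T}\vcr f = \mat{D}^{(n)}\vcr f$, and denote $\hat f_m = (\mat{D}^{(n)}\vcr f)_m = \sum_{i=0}^{n-1} \omega_{(n)}^{mi} f_i$ and likewise $\hat g_m$. Then the $k$-th entry of the output of the bilinear algorithm $(\mat{D}^{(n)\mathsf T},\mat{D}^{(n)\mathsf T},\mat{D}^{(n)-1})$ on $(\vcr f,\vcr g)$ is
\begin{equation*}
    y_k = \bigl(\mat{D}^{(n)-1}(\hat{\vcr f}\odot \hat{\vcr g})\bigr)_k
        = \frac{1}{n}\sum_{m=0}^{n-1}\omega_{(n)}^{-mk}\,\hat f_m \hat g_m.
\end{equation*}

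Next I would substitute the definitions of $\hat f_m$ and $\hat g_m$ and swap the order of summation to isolate a geometric sum in $m$:
\begin{equation*}
    y_k = \sum_{i=0}^{n-1}\sum_{j=0}^{n-1} f_i g_j \cdot \frac{1}{n}\sum_{m=0}^{n-1}\omega_{(n)}^{m(i+j-k)}.
\end{equation*}
The key ingredient is the orthogonality identity $\frac{1}{n}\sum_{m=0}^{n-1}\omega_{(n)}^{m\ell} = 1$ if $\ell \equiv 0 \pmod n$ and $0$ otherwise, which is a direct finite-geometric-series computation using $\omega_{(n)}^n = 1$. Applying this with $\ell = i+j-k$ collapses the double sum to the indices satisfying $i+j \equiv k \pmod n$, yielding $y_k = \sum_{i=0}^{n-1} f_i g_{(k-i)\bmod n}$, which is exactly the cyclic convolution formula from \cref{tab:variant_equations}.

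There is essentially no hard step: the one place that requires a small check is the orthogonality of roots of unity (equivalently, $\mat{D}^{(n)}\mat{D}^{(n)*}/n = \mat I$, which is where the factor $1/n$ in $\mat{D}^{(n)-1}$ comes from). Everything else is bookkeeping. I would therefore present the orthogonality identity as a short lemma or a one-line aside, and otherwise let the calculation above stand as the proof. An equivalent, slightly shorter route would be to verify $\mat{D}^{(n)}\mat{C}_{\langle\vcr f\rangle}\mat{D}^{(n)-1} = \diag(\mat{D}^{(n)}\vcr f)$ (diagonalization of circulants by the DFT), which immediately gives $\mat{C}_{\langle\vcr f\rangle}\vcr g = \mat{D}^{(n)-1}\bigl((\mat{D}^{(n)}\vcr f)\odot(\mat{D}^{(n)}\vcr g)\bigr)$; the choice between the two forms is a matter of taste.
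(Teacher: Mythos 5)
Your proposal is correct and follows essentially the same route as the paper's proof: expand the bilinear algorithm, interchange the order of summation, and use the vanishing of the geometric sum $\sum_{m=0}^{n-1}\omega_{(n)}^{m(i+j-k)}$ unless $i+j-k\equiv 0\pmod n$. The circulant-diagonalization shortcut you mention at the end is also precisely the paper's stated alternative proof, so both of your suggested routes coincide with what the authors present.
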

\begin{proof}
    By expanding the bilinear algorithm,
    $\vcr y = {\mat{D}^{(n)}}^{-1} \big((\mat{D}^{(n)} \vcr f) \odot (\mat{D}^{(n)} \vcr g) \big)$,
    we have the summation,
    \begin{align*} 
        y_k &= \frac{1}{n} \sum\limits_{i=0}^{n-1}\omega_{(n)}^{-ki}
    \bigg(\sum\limits_{j=0}^{n-1}\omega_{(n)}^{ij}f_j\bigg)
    \bigg(\sum\limits_{t=0}^{n-1}\omega_{(n)}^{it}g_t\bigg)  
        = \frac{1}{n} \sum\limits_{i=0}^{n-1} \sum\limits_{j=0}^{n-1}
        \sum\limits_{t=0}^{n-1} \omega_{(n)}^{(j+t-k)i} f_jg_t.  
    \end{align*}
    It suffices to observe that for any fixed $u=j+t-k \ne 0$ or $\ne n$, the outer
    summation yields a zero result, since the geometric sum simplifies to
    \[
        \sum_{i=0}^{n-1} \omega_{(n)}^{ui} = (1-(\omega_{(n)}^{u})^n)/(1-\omega_{(n)}^{u}) = 0.
    \]
    Therefore the only non-zero values in the summation are $f_jg_{k-j \
    (\text{mod } n)}$, yielding cyclic convolution.
\end{proof} 
Recall that the cyclic convolution between $\vcr f$ and $\vcr g$ can be
computed as a circulant matrix--vector product, $\mat
{C}_{\langle \vcr f \rangle} \vcr g$. Then one can leverage the
eigendecomposition of the circulant matrix~\cite{bottcher2005spectral} to prove
the discrete cyclic convolution theorem.
\begin{proof}[Alternative proof of~\cref{thm:dct_conv_thm}]
    Using the eigendecomposition of the circulant matrix, $\mat {C}_{\langle
    \vcr f \rangle} = {\mat{D}^{(n)}}^{-1} \operatorname{diag}(\mat{D}^{(n)} \vcr f)
    \mat{D}^{(n)}$, and $\operatorname{diag}(\vcr a) \vcr b = \vcr a
    \odot \vcr b$ for vectors $\vcr a, \vcr b \in \mathbb{R}^n$, we can rewrite
    the matrix--vector product $\mat {C}_{\langle \vcr f \rangle} \vcr g =
    {\mat{D}^{(n)}}^{-1} \big( (\mat{D}^{(n)} \vcr f) \odot (\mat{D}^{(n)} \vcr g) \big)$.
\end{proof}

Other transformations to compute cyclic convolution may be defined based on
roots of unity in other finite fields.  One example is the Fermat number
transform (FNT)~\cite{fnt}. The FNT leverages roots of unity in the ring of
integers modulo the Fermat number, $F_{(n)} = 2^{2^t}+1$ for some non--negative
integer $t$. The roots of unity can then be selected as powers of $2$, yielding
a transformation that requires only $O(n\log(n))$ integer or bitmask additions
and bit-shifts.

\subsection{Fast Fourier transform} \label{sec:fft}
Applying the DFT using the fast Fourier transform (FFT) can reduce the
complexity of this algorithm from $O(n^2)$ to $O(n\log(n))$. The FFT
applies a divide-and-conquer structure to the DFT, which can be seen by
breaking the indices into even and odd components, 
\begin{equation}
\label{eq:hi} 
\begin{gathered} 
    y_k =
    \sum\limits_{i=0}^{n-1} x_i \omega_n^{ik}  =
    \sum\limits_{i=0}^{n/2-1}x_{2i}\omega_{n/2}^{ik} + \omega_{n}^k
    \sum\limits_{i=0}^{n/2-1} x_{2i+1} \omega_{n/2}^{ik}.
\end{gathered} 
\end{equation}
Computing both terms in equation~\eqref{eq:hi} recursively gives the
split-radix-2 variant of the Cooley-Tukey algorithm. In general, this division
can be extended to larger parities. For example, consider breaking an
$n=n_1n_2$-length FFT into $n_1$ FFTs of size $n_2$,
\begin{equation}
\label{eq:radix} 
    y_{(kn_1+t)}  =
    \sum\limits_{s=0}^{n_1-1}
    \omega_{n_1}^{st} \Bigg[\omega_{n}^{sk} \sum\limits^{n_2-1}_{i=0} x_{(i
    n_1+s)}\omega_{n_2}^{ik} \Bigg].
\end{equation}
This decomposition produces a split-radix-$n_1$ FFT algorithm,
which uses $n_1$ FFTs of size $n_2$ followed by $n_2$ FFTs of size $n_1$.
Both approaches yield an $O(n\log(n))$ cost.

\subsection{Discrete trigonometric transform} \label{subsec:dct}
A disadvantage of the DFT is its reliance of complex arithmetic. 
The discrete cosine transform (DCT) provides an alternative transformation that
is real-valued and preserves both the stability and the $O(n \log(n))$
complexity of the FFT. On the other hand, FFT-like algorithms for the DCT require
evaluation of trigonometric functions, and while usable for linear convolution,
the DCT requires a larger embedding (more padding with zeros) than with the
DFT. The DCT and its inverse correspond to evaluation and interpolation of a
polynomial in a Chebyshev basis. Consequently, the DCT is particularly useful
for multiplication of polynomials that are represented in a Chebyshev
basis~\cite{basz}, which also corresponds to the symmetric convolution of their
coefficients~\cite{dct,symdtt}.

The DCT of a vector $\vcr y \in \mathbb{R}^{N+1}$ is $\mat C_N^I \vcr y$, where
the matrix $\mat{C}_N^I \in \mathbb{R}^{(N+1) \times (N+1)}$ is defined as
\[
    [\mat C_N^I]_{ij} = \varepsilon_{N,j} \cdot \cos \Big( \frac{i \cdot j \cdot \pi }{N} \Big) 
	\text{ s.t. } 
	\varepsilon_{N,j} = 
	\begin{cases} \frac{1}{2} \ : \ $j=0,N$ \\ 1 \ : \ \text{otherwise}
	\end{cases}.
\]
The superscript $I$ signifies that this is a DCT-$1$ transform.  The different
DCT types, ranging from the DCT-$1$ to DCT-$4$, differ by the shifts to $i$ and
$j$ inside the cosine function used to construct the basis and in the
definition of the first row/column of the matrix~\cite{dct}. Further, $\mat
C_N^I{}^{-1} = \frac{2}{N}\mat C_N^I$~\cite{basz} and the DCT is essentially
ideally conditioned. Linear convolution of $n$-dimensional vectors can be
computed via the DCT by first pre-padding with $\lfloor n/2 \rfloor + 1$ zeros
and post-padding with $\lfloor 3n/2 \rfloor+2$ zeros to both input vectors. Let
$\hat{\vcr{y}}$ be the output from the DCT-$1$ based bilinear algorithm with
the two padded vectors as inputs. Then the solution to linear convolution is
embedded in $2 \hat{\vcr{y}}$ from indices $2\lfloor n/2 \rfloor + 3$ to $2
\lfloor 3n/2 \rfloor + 1$, inclusively. However, for linear convolution, the
need to perform padding and the cost of evaluating trigonometric functions
generally makes DFT-based methods preferable to DCT.

\section{Convolution using modular polynomial arithmetic} \label{sec:fastMod}
Winograd presents a more general family of convolution
algorithms~\cite{winograd} based on modular arithmetic over polynomials.
Consider evaluating the remainder of the product $\fun{v} = \fun{p}\fun{q}$,
expressed as $\fun{\rho} = \fun{v} \text{ mod } \fun{M}$.  When $\deg{\fun{M}}
> \deg{\fun{v}}$, where we denote the degree of a polynomial by $\deg{\cdot}$,
then $\fun{\rho} = \fun{v}$.  If instead $\deg{\fun{M}} \le \deg{\fun{v}}$,
then $\fun{\rho} \ne \fun{v}$, as the remainder of $\fun{v}/\fun{M}$ will
produce a polynomial $\fun{\rho}$ of degree at most $\deg{\fun{M}}-1$.
Winograd shows that computing remainders of $\fun{v}$ (evaluating $\fun{p}$ and
$\fun{q}$) with well--chosen polynomial divisors will produce new fast and
stable linear convolution algorithms. We first present Winograd's algorithm for
recovering $\fun{v}$ with $\deg{\fun{M}} > \deg{\fun{v}}$.

\subsection{Winograd's convolution method}
In interpolation, each polynomial is evaluated at a set of discrete 
points. In Winograd's convolution algorithm, the remainder of the
product $\fun v = \fun p \fun q$ is computed using $k$ distinct
polynomial divisors, $\fun{m^{(i)}}$. The $k$ polynomial divisors,
$\fun{m^{(1)}}, \fun{m^{(2)}}, \cdots, \fun{m^{(k)}}$,
must be coprime, or share no common roots. Together, the product
of the $k$ polynomials define the larger polynomial divisor, 
$\fun{M} = \prod_i \fun{m^{(i)}}$. After computing the remainders with each
the $k$ polynomial divisors, $m^{(i)}$, the remainder $\fun{\rho} = \fun{v} \text{
mod } \fun M$ is recovered via the Chinese remainder theorem.

The \emph{Chinese remainder theorem} for polynomials provides a specification
for recovering the product $\fun{v} = \fun{p}\fun{q} \pmod{\fun{M}}$ from the set of
$k$ polynomial remainders of $\fun v$,
\begin{equation} \label{eq:winembed}
    \fun{u^{(i)}} \equiv \fun{v} \pmod{\fun{m^{(i)}}}.
\end{equation}
The bound on degree, in
combination with the fact that $\fun{m^{(1)}}, \ldots, \fun{m^{(k)}}$ are
coprime, ensures that the remainder polynomials $\fun{u^{(1)}},\ldots,
\fun{u^{(k)}}$ uniquely specify $\fun{v}$. Consequently, defining
$\fun{M^{(i)}} = \fun{M}/\fun{m^{(i)}}$, B\'{e}zout's identity implies that
there exists polynomials $\fun{n^{(i)}}$ and $\fun{N^{(i)}}$ such that
\begin{equation} \label{eq:bezout_eq}
    \fun{M^{(i)}}\fun{N^{(i)}} + \fun{m^{(i)}}\fun{n^{(i)}} = 1.
\end{equation}
A set of such polynomials $\fun{N^{(1)}},\cdots, \fun{N^{(k)}}$ can be computed
by the extended Euclidean algorithm. Later in~\cref{lem:e_mat}, we provide a
numerical formulation for the extended Euclidean algorithm. The desired
polynomial $\fun{v}$ satisfying the set of equivalences~\eqref{eq:winembed} can
be recovered as
\begin{align} \label{eq:berz}
    \fun{v}  = \Big(\sum_{i=1}^{k} \fun{u^{(i)}} \fun{M^{(i)}} \fun{N^{(i)}}\Big) \bmod{\fun{M}},
\end{align}
since $\fun{u^{(i)}} \fun{M^{(i)}} \fun{N^{(i)}} \equiv 0 \pmod{\fun{m^{(j)}}}$
for $i\neq j$, while 
\[
    \fun{u^{(i)}} \fun{M^{(i)}} \fun{N^{(i)}} 
    = \fun{u^{(i)}}(1-\fun{m^{(i)}}\fun{n^{(i)}}) 
    \equiv \fun{u^{(i)}} \pmod{\fun{m^{(i)}}}.
\]

Interpolation is a particular instance of a Winograd's convolution algorithm.
By selecting the polynomial divisors $\fun{m^{(i)}}$ to be the polynomial
$x-\chi_i$, where $\chi_i$ are nodes, Winograd's algorithm is equivalent to the
Toom-Cook method using Lagrangian interpolation~\cite{survey3}.  The DFT
algorithm for linear convolution may be obtained by the polynomial $M(x) =
x^{k} -1$ with $k=n+r-1$, whose roots are equally spaced on the unit circle on
the complex plane~\cite{survey5}.  With the choice $M(x) = x^{k} -1$ for
$k=n=r$, we obtain cyclic convolution~\cite{survey7}, since the remainder
polynomial $\fun{\rho}$ has the right coefficients, namely \[
    \sum_{i=0}^{2n-1}v_ix^i 
    \equiv \underbrace{\sum_{i=0}^{n-1}(v_{n+i}+v_i)x^i}_{\rho(x)} \pmod{x^{n}-1}.
\]
The polynomial divisors $\fun{m^{(i)}}$ can also be chosen to be of degree $d >
1$ (superlinear polynomials)~\cite{beyond}. Different degree choices for the
polynomial divisors will yield trade-offs between the bilinear rank and the
number of additions necessary.  A few examples of this trade-off are shown in
\cref{tab:costTb1}~\cite[Table~5.2]{survey3}.  The degree choices also affect
numerical stability.
\begin{table}[htb] 
\caption{Number of additions for Winograd's convolution algorithm with different bilinear ranks} 
\centering 
\label{tab:costTb1} 
\begin{tabular}{rccc} 
\noalign{\smallskip} \hline \hline
\noalign{\smallskip} 
    $n$ & $r$ & Rank & Adds \\ 
    \hline 
    2 & 2 & 3 & 3 \\
    2 & 2 & 4 & 7 \\
    3 & 3 & 5 & 20 \\
    3 & 3 & 6 & 10 \\
    3 & 3 & 9 & 4 \\
    4 & 4 & 7 & 41 \\
    4 & 4 & 9 & 15
\\ \noalign{\smallskip} \hline
\noalign{\smallskip} 
\end{tabular} 
\end{table} 

\subsection{Bilinear algorithm for Winograd's convolution method} \label{sec:winoalg_bilinear}
We now present a formulation of the bilinear algorithm for Winograd's
convolution algorithm.  As before, we denote the coefficients of an arbitrary
polynomial $\fun{p}$ as $\vcr p$. Let $\xmm md \in \mathbb{C}^{\deg{\vcr m}
\times (d+1)}$ be a matrix that can act on the coefficients of any degree $d$
polynomial $\fun p$ to compute the coefficients of $\rho = \fun p \pmod{m}$ as
$\vcr{\rho} = \xmm md \vcr p$ as proposed in~\cite{survey5}.  We provide a succinct
algebraic construction of this linear operator,
\begin{equation}
    \xmm md = 
    \begin{bmatrix}
        \mat I & - \mat L\mat{U}^{-1}
    \end{bmatrix},
\end{equation}
where $\mat{I}$ is an identity matrix of size $\deg{m}$, $\mat{L}$ contains the
top $\deg{\fun m}$ rows of $\tpl m{d-\deg{m}+1}$, and $\mat{U}$ contains the
bottom $d+1$ rows of $\tpl m{d-\deg{m}+1}$.
\begin{lemma}
    Let $\fun{\rho} = \fun{p} \pmod{\fun{m}}$, with $d=\deg{p}$, then
    \(\vcr{\rho} = \xmm md \vcr{p}.\)
\end{lemma}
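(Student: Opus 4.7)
The plan is to recognize that $\mat{L}$ and $\mat{U}$ are simply the two horizontal blocks of the Toeplitz matrix $\mat T = \tpl m{d-\deg{m}+1}$ that represents multiplication by $\fun m$. By the polynomial division algorithm, there exists a unique quotient $\fun q$ of degree $d - \deg{m}$ such that $\fun{p} = \fun{q}\fun{m} + \fun{\rho}$. Translated to coefficient vectors, this identity becomes
\[
    \vcr p \;=\; \mat T \vcr q \;+\; \begin{bmatrix}\vcr \rho \\ \vcr 0 \end{bmatrix},
\]
where the zero pad below $\vcr \rho$ has length $d+1-\deg{m}$, since $\deg{\rho} < \deg{m}$ forces $\vcr \rho$ to occupy exactly the top $\deg{m}$ entries of the $(d+1)$-dimensional coefficient vector.

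Next I would split $\vcr p$ conformally with the row partition of $\mat T$ into $\vcr p_L$ (first $\deg{m}$ entries) and $\vcr p_U$ (last $d+1-\deg{m}$ entries). The two blocks produce the simultaneous equations
\[
    \vcr p_L = \mat L \vcr q + \vcr \rho, \qquad \vcr p_U = \mat U \vcr q,
\]
where the zero pad contributes nothing to the bottom block by the dimension matching noted above.

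I would then verify that $\mat U$ is invertible: $\mat U$ is a square $(d-\deg{m}+1)\times(d-\deg{m}+1)$ upper-triangular Toeplitz matrix whose diagonal entries are all equal to the leading coefficient $m_{\deg{m}}$, which is nonzero by definition of the degree of $\fun m$. Hence $\vcr q = \mat U^{-1}\vcr p_U$, and substituting into the first block equation gives
\[
    \vcr \rho \;=\; \vcr p_L - \mat L \mat U^{-1}\vcr p_U \;=\; \begin{bmatrix} \mat I & -\mat L\mat U^{-1}\end{bmatrix}\begin{bmatrix}\vcr p_L \\ \vcr p_U\end{bmatrix} \;=\; \xmm md \vcr p.
\]

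The main thing to be careful about is the alignment of dimensions: the zero pad below $\vcr \rho$ must line up exactly with the rows comprising $\mat U$, which is precisely what lets the bottom block isolate $\vcr q$ without any interference from $\vcr \rho$. Once that bookkeeping is in place, the derivation is essentially block back-substitution on the Toeplitz system $\mat T \vcr q = \vcr p - [\vcr \rho;\, \vcr 0]$, and the only structural fact it really uses is that the leading coefficient of $\fun m$ is nonzero, guaranteeing invertibility of the triangular block $\mat U$.
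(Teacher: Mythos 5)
Your proposal is correct and follows essentially the same route as the paper: write $\fun p = \fun q\fun m + \fun\rho$, express $\fun q\fun m$ as the Toeplitz product $\tpl m{d-\deg m+1}\vcr q$, split conformally into the $\mat L$ and $\mat U$ blocks, recover $\vcr q$ from the bottom block, and back-substitute. Your explicit justification that $\mat U$ is invertible (upper-triangular with the nonzero leading coefficient of $\fun m$ on its diagonal) is a detail the paper's proof leaves implicit, so it is a welcome addition rather than a deviation.
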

\begin{proof}
    Let $\fun{q} = \fun{p}/\fun{m}$, so that $\fun{\rho} = \fun{p} - \fun{q}\fun{m}$.
    As $\deg{\fun{\rho}} < \deg{\fun p}$, then $\deg{\fun p} = d-\deg{\fun m}$.
    Defining $w=qm$, let 
    \[
        \vcr{p} = \begin{bmatrix}\vcr{p}_\text{upper} \\ 
        \vcr{p}_\text{lower} \end{bmatrix} \text{ and } 
        \vcr{w} = \begin{bmatrix}\vcr{w}_\text{upper} \\ 
        \vcr{w}_\text{lower}\end{bmatrix},
    \]
    where $\vcr{p}_\text{upper}, \vcr{w}_\text{upper}\in\mathbb{C}^{\deg{\fun{m}}}$, 
    so $\vcr{p}_\text{upper} = \begin{bmatrix} \mat{I} & \mat{O} \end{bmatrix} \vcr{p}$.
    Then we have that $\vcr{\rho} = \vcr{p}_\text{upper} - \vcr{w}_\text{upper}$.
    Furthermore, observing that $\vcr{w} = \tpl m{\deg{\fun q}+1}\vcr q$ and
    separating 
    $\tpl m{\deg{\fun p}+1} = \begin{bmatrix} \mat{L} \\ \mat{U}\end{bmatrix}$,
    where $\mat{L}\in\mathbb{C}^{\deg{m}\times (d-\deg{m}+1)}$ is lower-triangular and 
    $\mat{U}\in\mathbb{C}^{(d-\deg{m}+1)\times(d-\deg{m}+1)}$ is upper-triangular, we have
    \[
        \vcr w_\text{upper} = \mat{L}\mat{q}.
    \]
    Further, since 
    $\vcr{w} = \Big(\vcr{p} - \begin{bmatrix} \vcr{\rho} \\ \vcr 0\end{bmatrix}\Big)$,
    we have that $\vcr{p}_\text{lower} = \vcr{w}_\text{lower} = \mat{U}\mat{q}$,
    and so $\mat{q}=\mat{U}\vcr{p}_\text{lower}$.
    Therefore, we obtain
    \[
        \vcr{\rho} 
        = \vcr{p}_\text{upper} - \mat{L}\mat{U}^{-1}\vcr{p}_\text{lower} 
        = \xmm md \vcr{p}.
    \]
\end{proof}
Using this linear operator, we can now construct an operator for modular
polynomial multiplication. Since,
\[
    \fun p \fun q \bmod m =  (\fun p \bmod m)(\fun q \bmod m) \bmod m,
\]
we have that
\[
    \xmm m{\deg p + \deg q - 1} (\vcr p \ast \vcr q) 
    = \xmm m{2\deg m - 1}\big((\xmm m{\deg p}\vcr p)\ast (\xmm m{\deg q}\vcr q) \big).
\]
Further, given a bilinear algorithm $(\mat A, \mat B, \mat C)$ to compute
linear convolution of two $m$-dimensional vectors, we can obtain an algorithm
to compute $\fun{\rho} = \fun p \fun q \bmod m$,
\[
    \vcr{\rho} =\xmm m{2\deg m - 1}\mat 
    C \big((\mat A^\mathsf T \xmm m{\deg p} \vcr p) \odot (\mat B^\mathsf T \xmm m{\deg q}\vcr q)\big).
\]
To implement the Winograd's convolution algorithm, we need to compute $\vcr p
\vcr q \bmod m^{(i)}$ for $i\in \{1,\ldots, k\}$ to obtain the coefficients of
$\vcr u^{(1)}, \ldots, \vcr{u}^{(k)}$ in~\eqref{eq:winembed}.  After obtaining
these remainders $\vcr u^{(1)}, \ldots, \vcr{u}^{(k)}$, it suffices to
compute~\eqref{eq:berz} by multiplying each $\vcr u^{(i)}$ with the matrix,
\[
    \xmm M{\deg{M}+\deg{m^{(i)}}-2}\tpl{e^{(i)}}{\deg{m^{(i)}}} \xmm {m^{(i)}}{2\deg{m^{(i)}}-1}, 
\]
where $e^{(i)} = M^{(i)}N^{(i)}\bmod{\fun{M}}$. Consequently, we can interpret
Winograd's convolution algorithm as a prescription for building a new bilinear
algorithm for convolution from a set of $k$ bilinear algorithms that compute
the linear convolution between two sequences of vectors with dimension
$\deg{m^{(1)}},\ldots, \deg{m^{(k)}}$.
\begin{theorem}[Winograd's Convolution Algorithm]
    Given $\fun{M}=\prod_{i=1}^km^{(i)}$ where $\deg{M} = n+r-1$ and
    $m^{(1)},\cdots, m^{(k)}$ are coprime, as well as $(\mat A^{(i)},\mat
    B^{(i)},\mat C^{(i)})$ for $i\in\{1,\ldots, k\}$, where $(\mat A^{(i)},\mat
    B^{(i)},\mat C^{(i)})$ is a bilinear algorithm for linear convolution of
    vectors of dimension $\deg{m^{(i)}}$, Winograd's convolution algorithm
    yields a bilinear algorithm $(\mat A, \mat B, \mat C)$ for computing linear
    convolution with vectors of dimension $r$ and $n$, where
    \begin{align*}
            \mat A &= \begin{bmatrix}
                \xmm{m^{(1)}}{r-1}^\mathsf T\mat A^{(1)} & \cdots & 
                                     \xmm{m^{(k)}}{r-1}^\mathsf T\mat A^{(k)})
            \end{bmatrix}, \\
            \mat B &= \begin{bmatrix}\xmm{m^{(1)}}{n-1}^\mathsf T\mat B^{(1)} & \cdots & 
                                     \xmm{m^{(k)}}{n-1}^\mathsf T\mat B^{(k)})
            \end{bmatrix}, \text{and} \\
            \mat C &= \begin{bmatrix}
                \mat{\tilde{C}}^{(1)} & \cdots & \mat{\tilde{C}}^{(k)}
            \end{bmatrix},
    \end{align*}
    with $\mat{\tilde{C}}^{(i)} = 
        \xmm{M}{\deg{M}+\deg{m^{(i)}}-2}\tpl{e^{(i)}}{\deg{m^{(i)}}}
        \xmm {m^{(i)}}{2\deg{m^{(i)}}-1}\mat C^{(i)}$
    and polynomial $e^{(i)} = M^{(i)}N^{(i)}\bmod{\fun{M}}$. 
\end{theorem}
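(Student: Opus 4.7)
The plan is to verify that when the bilinear algorithm $(\mat A, \mat B, \mat C)$ is expanded on inputs $\vcr f, \vcr g$, the resulting computation matches the Chinese-remainder-based reconstruction in equation~\eqref{eq:berz}, and then invoke the degree condition $\deg{\fun M} = n+r-1 > \deg{\fun v}$ to conclude that $\fun v \bmod \fun M$ already equals $\fun v$, so the output is the full linear convolution.

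First, I would exploit the block-column structure of $\mat A$ and $\mat B$ to decompose $\mat A^{\mathsf T}\vcr f$ and $\mat B^{\mathsf T}\vcr g$ into $k$ stacked segments. The $i$-th segment of $\mat A^{\mathsf T}\vcr f$ is $\mat A^{(i)\mathsf T}\xmm{m^{(i)}}{r-1}\vcr f$, which (by the lemma characterizing $\xmm{m^{(i)}}{r-1}$) is exactly the encoding step of the inner bilinear algorithm $(\mat A^{(i)},\mat B^{(i)},\mat C^{(i)})$ applied to the coefficient vector of $\fun f \bmod \fun{m^{(i)}}$. The analogous identity holds on the $\vcr g$ side. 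The Hadamard product then pairs the intermediate bilinear products block-wise, so that applying $\mat C^{(i)}$ to the $i$-th block yields the linear convolution of $\fun f\bmod \fun{m^{(i)}}$ with $\fun g\bmod \fun{m^{(i)}}$, a coefficient vector of length $2\deg{m^{(i)}}-1$.

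Next, I would trace this block through the three remaining factors that comprise $\mat{\tilde C}^{(i)}$. The inner operator $\xmm{m^{(i)}}{2\deg{m^{(i)}}-1}$ reduces the block modulo $\fun{m^{(i)}}$; since $(\fun f \bmod \fun{m^{(i)}})(\fun g \bmod \fun{m^{(i)}}) \equiv \fun f\fun g \equiv \fun v \pmod{\fun{m^{(i)}}}$, this produces exactly $\fun{u^{(i)}}$ from~\eqref{eq:winembed}. Left multiplication by $\tpl{e^{(i)}}{\deg{m^{(i)}}}$ implements the polynomial product $\fun{u^{(i)}}\fun{e^{(i)}}$, and the outer operator $\xmm{M}{\deg{M}+\deg{m^{(i)}}-2}$ reduces that product modulo $\fun M$. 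Because $\fun{e^{(i)}}\equiv \fun{M^{(i)}}\fun{N^{(i)}}\pmod{\fun M}$, the $i$-th block contribution equals $\fun{u^{(i)}}\fun{M^{(i)}}\fun{N^{(i)}} \bmod \fun M$. The horizontal concatenation in $\mat C$ sums these contributions across $i$, which by~\eqref{eq:berz} is $\fun v \bmod \fun M$. Since $\deg{\fun v} = (r-1)+(n-1) = n+r-2 < \deg{\fun M}$, the reduction is trivial and the output is $\fun v$, i.e., the linear convolution $\vcr f \ast \vcr g$.

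The main obstacle I anticipate is the bookkeeping of the degree superscripts on the $\xmm{}{}$ operators: each invocation of the $\xmm{m}{d}$ lemma requires that $d$ equal (or at least upper-bound) the actual degree of the polynomial being reduced. Verifying this requires tracking that $\deg(\fun u^{(i)} \fun{e^{(i)}}) \le (\deg{m^{(i)}}-1)+(\deg{M}-1) = \deg{M}+\deg{m^{(i)}}-2$ and that the output of the inner convolution truly has degree at most $2\deg{m^{(i)}}-2$. Modulo this careful accounting of dimensions, the theorem reduces to a direct matrix-level transcription of the polynomial CRT identity~\eqref{eq:berz}.
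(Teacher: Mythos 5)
Your proposal is correct and follows essentially the same route as the paper: the paper derives the theorem as a direct matrix transcription of the CRT identity~\eqref{eq:berz}, using the modular-reduction operator $\xmm{m}{d}$ to form $\fun{u^{(i)}} = (\fun f \bmod \fun{m^{(i)}})(\fun g \bmod \fun{m^{(i)}}) \bmod \fun{m^{(i)}}$ blockwise, then multiplying by $\fun{e^{(i)}}$ and reducing modulo $\fun M$, with the degree condition $\deg{\fun M} = n+r-1 > \deg{\fun v}$ making the final reduction trivial. Your degree bookkeeping on the $\xmm{}{}$ subscripts also matches the dimensions the paper assigns to these operators.
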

To automatically generate Winograd's convolution algorithm, it suffices to have
a prescription to obtain $e^{(i)} = M^{(i)}N^{(i)}\bmod{\fun{M}}$.  Below, we
present a matrix formulation for solving B\'{e}zout's identity, which is
similar to computing a polynomial division via a triangular--Toeplitz linear
system of equations~\cite{pan2012structured}.
\begin{lemma} \label{lem:e_mat}
    Given coprime polynomials $\fun{\hat{M}}$ and $\fun{\hat{m}}$, the coefficients of 
    polynomials $\fun{\hat{N}}$ and $\fun{\hat{n}}$ satisfying
    $\fun{\hat{M}} \fun{\hat{N}} + \fun{\hat{m}}\fun{\hat{n}} = 1$ are
    \begin{equation}
        \begin{bmatrix}\vcr{\hat{N}} \\ \vcr{\hat{n}} \end{bmatrix} = 
        \begin{bmatrix}
            \tpl{\hat M}{\deg{\hat{m}}-1} & \tpl{\hat{m}}{\deg{\hat{M}}-1}
        \end{bmatrix}^{-1}
        \begin{bmatrix}1 \\ 0 \\ \vdots \\ 0\end{bmatrix}.
    \end{equation}
\end{lemma}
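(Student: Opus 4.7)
The plan is to recognize the B\'{e}zout identity $\fun{\hat M}\fun{\hat N} + \fun{\hat m}\fun{\hat n} = 1$ as a square linear system in the unknown coefficients of $\fun{\hat N}$ and $\fun{\hat n}$, and then invoke coprimality to argue the coefficient matrix is invertible.

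First, I would translate each polynomial product into a Toeplitz matrix-vector product using the paper's notation: the coefficient vector of $\fun{\hat M}\fun{\hat N}$ is $\tpl{\hat M}{\deg{\hat{m}}-1}\vcr{\hat N}$, and analogously the coefficient vector of $\fun{\hat m}\fun{\hat n}$ is $\tpl{\hat m}{\deg{\hat{M}}-1}\vcr{\hat n}$. Summing these and equating to the padded coefficient vector of the constant polynomial $1$, namely $(1, 0, \ldots, 0)^\mathsf T$, yields the stacked linear system written in the lemma statement.

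Second, I would verify that the stacked coefficient matrix is square by counting dimensions. The unknown polynomials satisfy $\deg{\hat N} < \deg{\hat m}$ and $\deg{\hat n} < \deg{\hat M}$, giving $\deg{\hat M} + \deg{\hat m}$ total unknown coefficients. Since $\fun{\hat M}\fun{\hat N} + \fun{\hat m}\fun{\hat n}$ has degree at most $\deg{\hat M} + \deg{\hat m} - 1$, the number of scalar equations is also $\deg{\hat M} + \deg{\hat m}$, so the matrix is indeed square.

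Third, and this is the crux of the argument, I would show the square matrix has trivial kernel. Suppose $(\vcr{N'}, \vcr{n'})$ lies in the kernel, so the associated polynomials satisfy $\fun{\hat M}\fun{N'} = -\fun{\hat m}\fun{n'}$ subject to $\deg{N'} < \deg{\hat m}$ and $\deg{n'} < \deg{\hat M}$. By coprimality, $\fun{\hat m}$ must divide $\fun{N'}$; but the degree bound $\deg{N'} < \deg{\hat m}$ forces $\fun{N'} = 0$, and in turn $\fun{n'} = 0$. Hence the coefficient matrix is invertible, and inverting it recovers the closed-form expression in the lemma. The main obstacle is this kernel computation; existence of a B\'{e}zout pair under coprimality then guarantees that the inverse indeed returns valid $(\fun{\hat N}, \fun{\hat n})$, with the remaining work being routine bookkeeping on polynomial degrees and Toeplitz dimensions.
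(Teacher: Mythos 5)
Your proposal is correct and follows essentially the same route as the paper's proof: rewrite the B\'{e}zout identity as the stacked Toeplitz linear system, then establish invertibility by showing the kernel is trivial via coprimality (any kernel element would force $\fun{\hat m}$ to divide a polynomial of degree less than $\deg{\hat m}$). Your version is slightly more explicit about the dimension count and about concluding $\fun{n'}=0$, but the key ideas are identical.
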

\begin{proof}
    The polynomials degrees of $\fun{\hat{N}}$ and $\fun{\hat{n}}$ are at most
    $\deg{\fun{\hat{N}}} \le \deg{\fun{\hat{m}}}-1$ and 
    $\deg{\fun{\hat{n}}} \le \deg{\fun{\hat{M}}}-1$~\cite{annala2016bezout}.
    Therefore, we can rewrite the equivalence 
    $\fun{\hat{M}} \fun{\hat{N}} + \fun{\hat{m}}\fun{\hat{n}} = 1$ as
    \begin{equation} \label{eq:ide_sys}
           \underbrace{\begin{bmatrix}
               \tpl{\hat{M}}{\deg{\hat{m}}-1} & \tpl{\hat{m}}{\deg{\hat{M}}-1}
           \end{bmatrix}}_{\mat{A}}
                \begin{bmatrix}\vcr{\hat{N}}\\\vcr{\hat{n}}\end{bmatrix}
            = \begin{bmatrix}1\\0\\ \vdots \\ 0 \end{bmatrix}.
    \end{equation}
    To show that the matrix $\mat{A}$ is invertible, we demonstrate that there
    cannot exist a vector $\vcr x \in \mathbb{C}^{\deg{\hat{m}} +\deg{\hat{M}}}$,  $\vcr x
    \neq \vcr 0$, such that $\mat A\vcr x = \vcr 0$. Equivalently, we show 
    there cannot exists vectors $\vcr{\hat{N}}$ and $\vcr{\hat{n}}$ such that
    $\tpl{\vcr{\hat{M}}}{\deg{\hat{m}}-1}\vcr{\hat{N}} = -\tpl{\vcr{\hat{m}}}{\deg{\hat{M}}-1}\vcr{\hat{n}}$. Since
    $\fun{\hat{M}}$ and $\fun{\hat{m}}$ are coprime, $\fun{\hat{N}}$ must be a multiple of $\fun{\hat{m}}$.
    However, because $\deg{\fun{\hat{N}}} < \deg{\fun{\hat{m}}}$, there cannot exist such a
    polynomial $\fun{\hat{N}}$.
\end{proof}

\section{Other fast algorithms for convolution} \label{sec:fastAlg} 
We now discuss two other techniques for fast convolution, which are not based
on polynomial algebra.

\subsection{Fast symmetric multiplication} \label{sec:symImp}
Recall that convolution can be solved by a Toeplitz matrix--vector product,
$\vcr y = \tpl fn\vcr g$.  Consider, for simplicity, the scenario when $n=r$ is
the dimension of both $\vcr f$ and $\vcr g$.  This problem can be converted to
a Hankel matrix-vector product by reversing the order of the elements in the
vector $\vcr{g}$ with $\vcr{y} = \hkl {f}n\vcr{\hat{g}}$, where
\[
    \hkl{f}n =
    \begin{bmatrix}
        &        & f_1 \\
        &   \adots & \vdots \\
        f_1   & & f_n \\
        \vdots & \adots  & \\
        f_n &   &   \\
    \end{bmatrix}.
\]
We can embed (for simplicity) $\hkl{f}n$ within a square Hankel matrix,
$\mat{H}_{(2n-1)}(\vcr{x})\in\mathbb{R}^{(2n-1)\times(2n-1)}$, by appending
$n-1$ zero columns to $\hkl{f}n$ (using 
$\vcr x = \begin{bmatrix} \vcr 0^\mathsf T & \vcr{f}^\mathsf T & \vcr{0}^\mathsf T\end{bmatrix}^\mathsf T$ 
to define each anti-diagonal of the matrix), so that 
$\vcr{y} = \mat H_{(2n-1)}(\vcr{x})\begin{bmatrix} \vcr{\hat{g}} \\ 
\vcr 0 \end{bmatrix}$.
Now, we can observe that this square Hankel matrix is symmetric, and further
that this type of matrix can be subdivided recursively into Hankel matrices,
\[
    \mat{H}_{(kl)}(\vcr{x}) 
    = \begin{bmatrix} 
        \mat{H}_{(k)}(\vcr{x}_1) &\cdots & \mat{H}_{(k)}(\vcr{x}_l) \\ 
        \vdots & & \vdots \\ 
        \mat{H}_{(k)}(\vcr{x}_l) & \cdots & \mat{H}_{(k)}(\vcr{x}_{2l-1})
    \end{bmatrix}.
\]
Consequently, we can leverage fast nested bilinear algorithms to compute the
product of a symmetric matrix and a vector~\cite{SD_ETHZ_2015}.  These
algorithms compute the multiplication of an $l\times l$ symmetric matrix with a
vector using $l(l+1)/2$ multiplications.  The choice of $l=2$, requires 3
multiplications, and yields the fastest asymptotic complexity (same as
Karatsuba's algorithm $O(n^{\log_2(3)})$).  This variant of the algorithm
performs the Hankel matrix--vector product
$\vcr{y} = \mat{H}_{(2k)}(\vcr{x})\vcr{z}$ using the transformation,
\begin{align*}
  \begin{bmatrix} \vcr{y}_1 \\ \vcr{y}_2 \end{bmatrix} &= 
  \begin{bmatrix} 
  \mat{H}_{(k)}(\vcr{x}_1)\vcr{z}_1  +  \mat{H}_{(k)}(\vcr{x}_2)\vcr{z}_2 \\
  \mat{H}_{(k)}(\vcr{x}_2)\vcr{z}_1  +  \mat{H}_{(k)}(\vcr{x}_3)\vcr{z}_2
  \end{bmatrix} \\
  &=
  \begin{bmatrix} 
  \big(\mat{H}_{(k)}(\vcr{x}_1)-\mat{H}_{(k)}(\vcr{x}_2)\big)\vcr{z}_1  +  \mat{H}_{(k)}(\vcr{x}_2)(\vcr{z}_1 + \vcr{z}_2) \\
  \mat{H}_{(k)}(\vcr{x}_2)(\vcr{z}_1 + \vcr{z}_2)  +  \big(\mat{H}_{(k)}(\vcr{x}_3)-\mat{H}_{(k)}(\vcr{x}_2)\big)\vcr{z}_2
  \end{bmatrix}.
\end{align*}
The new form can be computed with $3$ Hankel--vector products of half the
dimension. The addition of the Hankel submatrices can be computed with $O(r)$
additions. Therefore, the cost of the fast symmetric algorithm is $T(n) =
3T(n/2) + O(n) = O(nr^{\log_2(3/2)})$ by directly computing the convolution
once $n \approx r$.

\subsection{Minimizing scalar products} \label{sec:sparse_bilinear}
There remain other bilinear algorithms for convolution not covered by the
techniques in the previous sections. For example, a bilinear algorithm for
linear convolution of $3$-dimensional vectors can be derived by the
factorization \cite{survey3},
\begin{equation}
    \begin{bmatrix}
        1&0&0&0&0&0\\
        -1&-1&0&1&0&0\\
        -1&1&-1&0&1&0\\
        0&-1&-1&0&0&1\\
        0&0&1&0&0&0
    \end{bmatrix}
    \left(
        \begin{bmatrix}
            1&0&0\\
            0&1&0\\
            0&0&1\\
            1&1&0\\
            1&0&1\\
            0&1&1
        \end{bmatrix}
        \begin{bmatrix}
            f_0\\f_1\\f_2
        \end{bmatrix}
        \odot
        \begin{bmatrix}
            1&0&0\\
            0&1&0\\
            0&0&1\\
            1&0&0\\
            1&0&1\\
            0&1&1
        \end{bmatrix}
        \begin{bmatrix}
            g_0\\g_1\\g_2
        \end{bmatrix}
    \right).
\end{equation} 
While this bilinear algorithm does not achieve the minimal rank,
the cost of encoding and decoding is lower than for the bilinear algorithm of
the optimal rank since $(\mat A, \mat B, \mat C)$ are sparse and require only
additions or subtractions to apply.

\section{Adaptations of convolution algorithms} \label{sec:adapt}
All fast algorithms described so far can be adapted to efficiently perform
convolution when the filter size is small, i.e., $r\ll n$, and can be applied to
multidimensional convolution. We describe multidimensional convolution
adaptations using the bilinear algorithm representation.

\subsection{Convolution with small filters} \label{sec:small_filter}
Many popular CNN architectures today use filters (referred to as kernels in
CNNs) that are small in size. The 2D filter's size ranges from $11 \times 11$
down to $3 \times 3$ \cite{cnn1,cnn2,cnn3,cnn4,alex}, whereas the images are of
dimension $256 \times 256$ and larger \cite{alex}. While fast algorithms, such
as the interpolation approach, can produce efficient convolution algorithms for
any input size, these algorithms are subject to large errors when the
dimensions are larger than four \cite{lavin, winograd}.

The cost and/or error of convolution can be reduced by breaking a long
convolution into a series of smaller convolutions. One simple approach is to
divide a vector into a series of small vectors. As an example, consider a
Toeplitz matrix--vector multiplication, $\tpl{f}{n}\vcr g$, for computing
the 1D linear convolution $\vcr f \ast \vcr g$, where $r \ll n$. We can
represent the products in its block form,
\begin{equation} \label{eq:blockToe} 
    \tpl fn\vcr g = 
    \begin{bmatrix} 
        \mat{A} & && \\ 
        \mat{B} & \mat{A} & &\\ 
         &   \ddots &  \ddots & \\ 
         &  & \mat{B} & \mat{A}
    \end{bmatrix} 
    \begin{bmatrix} \vcr{g_{n/r-1}} \\ \vdots \\
        \vdots  \\ \vcr{g_0}
    \end{bmatrix}.
\end{equation}
The block Toeplitz matrix $\tpl fn$ can be written using Kronecker
products~\cite{tensorML}, $\tpl fn = (\mat{I} \otimes \mat{A}) + (\mat{I'}
\otimes \mat{B})$, where $\mat{I'}$ is a matrix with a sub-diagonal of ones.
Let $\mat{G} \in \mathbb{R}^{r \times (n/r)}$ be the matrix where
$\text{vec}(\mat{G}) = \vcr g$.  We can rewrite the Toeplitz matrix--vector
multiplication problem as
\begin{align*} \label{eq:tensor_small_filter} 
    \tpl fn \vcr g &= (\mat{I}\otimes \mat{A})\text{vec}(\mat G) 
    + (\mat{I'} \otimes \mat{B})\text{vec}(\vcr G) \\
    &= \text{vec} (\mat{A} \mat G) + \text{vec}(\mat{B} \mat G \mat{I'}{}^\mathsf{T}).
\end{align*}
Given a fast convolution algorithm with a cost of $T(r)$ when both inputs
are size $r$, the asymptotic complexity of computing this entire convolution is
$O\big(\frac{n}{r} \cdot T(r)\big)$. When $n \gg r$, this formulation can
reduce the cost of the fast Fourier transform from $O(n\text{log}(n))$ to
$O(n\text{log}(r))$.

\subsection{Multidimensional convolution via 1D convolution} \label{sec:multidimensional}
For problems in image processing and scientific computing, where the inputs are
2D, 3D, or 4D, we need methods for multidimensional convolution. We provide a
way to construct 2D convolution algorithms from 1D convolution algorithms,
which extends in a natural way to higher-dimensional convolutions.  Given
$\mat{F}\in\mathbb{R}^{r\times r}$ and $\mat{G} \in \mathbb{R}^{n\times n}$,
the 2D linear convolution $\mat{Y} = \mat{F} \ast \mat{G}$ with
$\mat{Y}\in\mathbb{R}^{(n+r-1)\times(n+r-1)}$ gives
\begin{equation} \label{eq:nd_bilinear} 
\begin{gathered} 
y_{ab} =
    \sum_{i=\max(0,a-n+1)}^{\min(a,r-1)} 
    \sum_{j=\max(0,b-n+1)}^{\min(b,r-1)} f_{ij}g_{a-i,b-j}.
\end{gathered} 
\end{equation}
A 2D convolution can be broken into a convolution of convolutions. That is,
each row is individually convolved and then the rows are convolved amongst each
other. Given a bilinear algorithm for a linear 1D convolution, $(\mat A,\mat
B,\mat C)$, the bilinear algorithm for a linear 2D convolution~\cite{lavin} is
\begin{equation} \label{eq:lavin2D} 
    \mat Y = \mat C \Big[ (\mat A^\mathsf T\mat F\mat A) 
            \odot (\mat B^\mathsf T\mat G\mat B) \Big] \mat C^\mathsf T.
\end{equation}
Correctness of this algorithm can be shown by defining the 2D convolution
tensor, $\tsr{T}^{(2D)} = \tsr{T}\otimes \tsr{T}$, so that
\[
    t^{(2D)}_{in+j,ur+v,a(n+r-1)+b} = t_{iua}t_{jvb}.
\] 
This tensor computes 2D convolution as $\hat{y}_{k} = \sum_{i,j}
t^{(2D)}_{ijk}\hat{f}_i\hat{g}_j$, where
$\vcr{\hat{y}}=\text{vec}(\mat{Y})$,
$\vcr{\hat{f}}=\text{vec}(\mat{F})$, and
$\vcr{\hat{g}}=\text{vec}(\mat{G})$,
since,
\[
    y_{ab} 
    = \sum_{i=0}^{r} \sum_{j=0}^{r} 
    \sum_{u=0}^{n} \sum_{v=0}^{n} t_{iua}t_{jvb}f_{ij}g_{uv}.
\]
A rank $R^2$ decomposition of $\tsr{T}^{(2D)}$ can be constructed from a rank
$R$ decomposition of $\tsr{T}$ as $(\mat{A} \otimes \mat{A}, \mat{B}\otimes
\mat{B}, \mat{C}\otimes \mat{C})$.  The resulting bilinear algorithm,
\[
    \vcr{\hat{y}} 
    = (\mat{C}\otimes \mat{C}) \Big[ 
      \big((\mat{A}\otimes \mat{A})^\mathsf T \vcr{\hat{f}}\big) \odot 
      \big((\mat{B}\otimes \mat{B})^\mathsf T \vcr{\hat{g}}\big)\big)\Big],\]
is algebraically equivalent to~\eqref{eq:lavin2D}.

\subsection{Linear 1D convolution via multidimensional linear convolution}
We can also compute a long 1D linear convolution with multidimensional
convolution using the technique called \emph{overlap-add}
\cite{overlap,overlap2}. For simplicity, we assume both the filter $\vcr f$ and
input $\vcr g$ are $n$-dimensional vectors. Suppose we want to decompose the
$n$-length linear convolution, where $n=\gamma \eta$, into $\gamma$ linear
convolutions for $\eta$-dimensional vectors. We represent overlap-add by the
recomposition matrix $\mat Q^{(\gamma,\eta)} \in \mathbb{R}^{2n -1 \times
(2\gamma -1)(2\eta - 1)}$, defined by
\begin{align} \label{eq:overlap_indices}
    q^{(\gamma,\eta)}_{ij} &= 
    \begin{cases} 
        1 : \text{ if }i = j - (\eta - 1)\lfloor \frac{j}{2\eta-1} \rfloor\\
        0 : \text{ otherwise}
    \end{cases}, \quad \text{with block structure} \\
    \mat Q^{(\gamma,\eta)} &=
    \begin{bmatrix}
        \mat I_{\eta-1} &&&&&&&&& \\
        & 1 &&&&&&&& \\
        && \mat I_{\eta-1} & \mat I_{\eta-1} &&&&&& \\
        &&&& 1 &&&&&\\
        &&&&& \ddots &&&&\\
        &&&&&&\mat{I}_{\eta-1}&\mat I_{\eta-1} &&\\
        &&&&&&&&1&\\
        &&&&&&&&&\mat{I}_{\eta-1}
    \end{bmatrix}.
\end{align}
\begin{theorem}
    Let $\mat{\tilde{Y}} = \mat{\tilde{F}} \ast \mat{\tilde{G}}$, where 
    $\mat{\tilde{F}},\mat{\tilde{G}} \in \mathbb{R}^{\gamma \times \eta}$.
    Then if $\vcr f = \text{vec}(\mat{\tilde{F}})$, $\vcr g =\text{vec}(\mat{\tilde{G}})$,
    $\vcr f \ast \vcr g = \text{vec}(\mat Q^{(\gamma,\eta)} \mat{\tilde Y})$.
\end{theorem}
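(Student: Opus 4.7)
The plan is to recast the claim as a polynomial identity and then check that $\mat Q^{(\gamma,\eta)}$ implements the resulting index collapse. Identify $\vcr f$ with the coefficients of the univariate polynomial $f(x)=\sum_{k=0}^{n-1}f_kx^k$, so that $\vcr f\ast\vcr g$ is the coefficient vector of $f(x)g(x)$. The reshape $\mat{\tilde F}$ of shape $\gamma\times\eta$ (under the convention $\tilde F_{q,r}=f_{q\eta+r}$ for $0\le q<\gamma$ and $0\le r<\eta$) amounts to introducing an auxiliary indeterminate $y$:
\[
    f(x)=\sum_{q,r}\tilde F_{q,r}\,x^{q\eta+r}=\tilde F(x,x^\eta),
\]
where $\tilde F(x,y)=\sum_{q,r}\tilde F_{q,r}\,x^r y^q$ is the bivariate polynomial whose coefficient array is $\mat{\tilde F}$. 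The analogous identities hold for $\vcr g,\mat{\tilde G}$ and for $\mat{\tilde Y}$.

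Because the 2D linear convolution $\mat{\tilde Y}=\mat{\tilde F}\ast\mat{\tilde G}$ is, by definition, the coefficient array of the bivariate product $\tilde Y(x,y)=\tilde F(x,y)\tilde G(x,y)$, specializing $y=x^\eta$ gives
\[
    f(x)g(x)=\tilde Y(x,x^\eta)=\sum_{a=0}^{2\gamma-2}\sum_{b=0}^{2\eta-2}\tilde Y_{a,b}\,x^{a\eta+b}.
\]
Reading off the coefficient of $x^m$ yields the overlap-add rule
\[
    (\vcr f\ast\vcr g)_m=\sum_{(a,b)\,:\,a\eta+b=m}\tilde Y_{a,b}.
\]

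The final step is to verify that $\mat Q^{(\gamma,\eta)}$ applied to $\text{vec}(\mat{\tilde Y})$ carries out exactly this summation. Writing the column index of $\mat Q^{(\gamma,\eta)}$ as $j=a(2\eta-1)+b$ with $a=\lfloor j/(2\eta-1)\rfloor$ and $b=j\bmod(2\eta-1)$, the defining rule $q^{(\gamma,\eta)}_{ij}=1\iff i=j-(\eta-1)\lfloor j/(2\eta-1)\rfloor$ simplifies to $i=a\eta+b$. Hence column $j$ of $\mat Q^{(\gamma,\eta)}$ deposits the entry $\tilde Y_{a,b}$ into row $a\eta+b$ of the output, and summing the columns sharing a common output row index $m$ recovers the identity above. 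As a sanity check, the collisions $i=a\eta+b=(a+1)\eta+(b-\eta)$ occur precisely for $b\ge\eta$, reproducing the block-bidiagonal arrangement of $\mat I_{\eta-1}$ and scalar $1$ blocks in the displayed form of $\mat Q^{(\gamma,\eta)}$.

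The main obstacle is purely bookkeeping: pinning down the reshape convention so that the indexing formula for $\mat Q^{(\gamma,\eta)}$ is compatible with the $(a,b)\mapsto a\eta+b$ collapse dictated by the substitution $y=x^\eta$. Once these conventions are fixed, each of the three steps reduces to a short algebraic manipulation.
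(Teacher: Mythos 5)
Your proof is correct, and it takes a genuinely different route from the paper's. The paper argues at the level of convolution tensors: it shows that contracting the last mode of $\tsr{T}^{(2D)}=\tsr{T}^{(\gamma)}\otimes\tsr{T}^{(\eta)}$ with $\mat Q^{(\gamma,\eta)}$ yields the 1D convolution tensor $\tsr{T}^{(\gamma\eta)}$, by expanding everything into Kronecker deltas and collapsing $\delta(i+u,c)\,\delta(j+v,d)\,\delta(a\eta+b,c\eta+d)$ to $\delta(a\eta+b,(i+u)\eta+(j+v))$. You instead encode the reshape as the substitution $y=x^\eta$ in the bivariate polynomial $\tilde F(x,y)$, so the overlap-add rule $(\vcr f\ast\vcr g)_m=\sum_{a\eta+b=m}\tilde Y_{a,b}$ drops out of reading off the coefficient of $x^m$ in $\tilde Y(x,x^\eta)$, leaving only the check that $\mat Q^{(\gamma,\eta)}$ implements that index collapse --- which you carry out correctly from the defining formula (writing $j=a(2\eta-1)+b$ gives $i=a\eta+b$, and your collision analysis for $b\ge\eta$ matches the overlapping $\mat I_{\eta-1}$ blocks). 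The two framings prove the same combinatorial identity but buy different things: the paper's tensor-contraction statement makes it immediate that the recomposition is a property of the bilinear map itself, independent of whichever CP factors $(\mat A,\mat B,\mat C)$ are used to realize the 2D convolution, whereas your polynomial argument is more elementary and self-contained, avoids introducing $\tsr{T}^{(2D)}$ at all, and makes the reason overlap-add works transparent (monomials that alias under $y\mapsto x^\eta$ must be summed). The one convention you rightly flag is real: the theorem's $\text{vec}$ must be read row-major, $f_{q\eta+r}=\tilde F_{q,r}$, consistent with the paper's own index calculations, and $\mat Q^{(\gamma,\eta)}\mat{\tilde Y}$ must be interpreted as $\mat Q^{(\gamma,\eta)}$ acting on the vectorization of $\mat{\tilde Y}$ for the dimensions to match; with those fixed, your argument is complete.
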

\begin{proof}
    It suffices to show that multiplication along the last mode of
    $\tsr{T}^{(2D)}=\tsr{T}^{(\gamma)}\otimes \tsr{T}^{(\eta)}$ with $\mat
    Q^{(\gamma,\eta)}$ gives $\tsr{T}^{(\gamma\eta)}$, where we denote the
    linear convolution tensor for $n$-dimensional vectors by $\tsr{T}^{(n)}$.
    Using~\eqref{eq:overlap_indices}, we can express $\mat Q^{(\gamma,\eta)}$
    as \[q^{(\gamma,\eta)}_{a\eta+b,c(2\eta-1)+d} = \delta(a\eta+b,
    c(2\eta-1)+d-(\eta -1)c) = \delta(a\eta+b,c\eta+d),\] where $b<\eta$,
    $d<2\eta-1$, and $\delta(i,j)$ is the Kronecker delta.  Then the product of
    $\mat Q^{(\gamma,\eta)}$ and $\tsr{T}^{(2D)}$ gives
    \begin{align*}
        \sum_{c=0}^{2\gamma-2}\sum_{d=0}^{2{\eta}-2}t^{(2D)}_{i\eta+j,u\eta+v,c(2\eta-1)+d}\delta(a\eta+b,c\eta+d)
        &=\sum_{c=0}^{2\gamma-2}\sum_{d=0}^{2\eta-2} t^{(\eta)}_{iuc}t^{(\gamma)}_{jvd}\delta(a\eta+b,c\eta+d).
    \end{align*}
    We can use the definition of $\tsr{T}^{(n)}$ from~\eqref{eq:bi_func} with the
    Kronecker delta to reduce the equation above to
    \begin{align*}
        \sum_{c=0}^{2\gamma-2}\sum_{d=0}^{2\eta-2} \delta(i+u,c)\delta(j+v,d)\delta(a\eta+b,c\eta+d) 
        &= \delta(a\eta+b,(i+u)\eta+j+v) \\
        &= t^{(\gamma\eta)}_{i\eta+j,u\eta+v,a\eta+b}.
    \end{align*}
\end{proof}

\subsection{Cyclic 1D convolution via multidimensional cyclic convolution}
While the overlap-add approach decomposes a linear convolution, an $n$-length 
cyclic convolution can be broken into an $n_1 \times n_2$--length nested cyclic
convolution, where $n=n_1n_2$ and $n_1$ and $n_2$ are coprime, using the
Agarwal-Cooley Algorithm~\cite{agarwal1}. The Agarwal-Cooley algorithm uses
the Chinese remainder theorem to decompose the indices of cyclic convolution.
To denote modular arithmetic, let the notation $(x)_z$ be equivalent to $x
\text{ mod } z$. We start with the cyclic convolution between vectors
$\vcr f \in \mathbb{R}^n$ and $\vcr g \in \mathbb{R}^n$,
\begin{equation} \label{eq:circ1}
    y_k = \sum\limits_{i=0}^{n-1} f_i g_{(k-i)_n}.
\end{equation}
In order to decompose the 1D variables $k$ and $i$ into some 2D variables, we
define the corresponding modular variables, $k_1 = (k)_{n_1}, k_2 = (k)_{n_2},
i_1 = (i)_{n_1}, \text{ and } i_2 = (i)_{n_2}$.  The Chinese remainder theorem
asserts there is a unique bijection between the remainders of $k_1,k_2$ (and
similarly for $i_1,i_2$) to the original index $k$ (and similarly $i$) through
the mapping,
\[
    k = (k_1e_1 + k_2e_2)_{n} \text{ and } i = (i_1e_1 + i_2e_2)_{n},
\]
where $e_1 = n_2m_2$, $e_2 = n_1m_1$, and $m_1$ and $m_2$ are integers that
satisfy B\'{e}zout's identity~\eqref{eq:bezout_eq},
\[
    n_1m_1 + n_2m_2 = 1 \ (\text{mod } n).
\]
Therefore,~\eqref{eq:circ1} can be rewritten as
\begin{equation} \label{eq:circ2}
    \underbrace{y_{ (e_1k_1 + e_2k_2)_{n} }}_{\tilde{y}_{k_1k_2}} = 
    \sum\limits_{i_1=0}^{n_1-1} \sum\limits_{i_2 = 0}^{n_2-1} 
    \underbrace{f_{(e_1i_1 + e_2i_2)_{n}}}_{\tilde{f}_{i_1i_2}} 
    \underbrace{g_{(e_1(k_1 - i_1) + e_2(k_2 - i_2))_{n}}}_{\tilde{g}_{k_1-i_1,k_2-i_2}}.
\end{equation}
Indeed, this is now a 2D convolution problem.  We can reorder the indices
using the permutation matrix $\mat P \in \mathbb{R}^{n \times n}$, where
\[
    [\mat P]_{ij} = \begin{cases} 1 : \text{if } j =
    \lfloor i/n_2 \rfloor e_1 + (i)_{n_2}e_2 \\ 0 : \text{otherwise} \end{cases}.
\]
Now, we can apply the bilinear algorithms for the two cyclic convolution algorithms,
$(\mat{A}^{(n_1)},\mat{B}^{(n_1)},\mat{C}^{(n_1)})$
and $(\mat{A}^{(n_2)},\mat{B}^{(n_2)},\mat{C}^{(n_2)})$, and rewrite~\eqref{eq:circ2} as
\begin{equation}
    \vcr{y} = \mat{P}^\mathsf T(\mat{C}^{(n_1)} \otimes \mat{C}^{(n_2)}) 
    \big( ({\mat{A}^{(n_1)}}^\mathsf T \otimes {\mat{A}^{(n_2)}}^\mathsf T )(\mat P \vcr f) 
    \odot ({\mat{B}^{(n_1)}}^\mathsf T \otimes {\mat{B}^{(n_2)}}^\mathsf T )(\mat P \vcr g)
    \big).
\end{equation}

\subsection{Fast multidimensional convolution using low-rank approximations} \label{sec:lowrank}
Multidimensional convolution can be accelerated when the inputs to convolution
admit a low-rank matrix or tensor decomposition~\cite{khoromskij2010fast}.
We illustrate this approach for a 2D convolution of low rank matrices.
The approach extends naturally to tensors with the use of the canonical
polyadic (CP) decomposition~\cite{tensor1, khoromskij2010fast}.
For 2D convolution, suppose the input matrices $\mat F$ and $\mat{G}$ have
rank $R_{\mat F}$ and $R_{\mat G}$, respectively, so
\[
    \mat F = \sum\limits_{i=1}^{R_{\mat F}} \sigma_i^{(\vcr f)} 
    \vcr u_i^{(\vcr f)} {\vcr v_i^{(\vcr f)}}^{\mathsf T} \quad \text{and} \quad
    \mat G = \sum\limits_{i=1}^{R_{\mat G}} \sigma_i^{(\vcr g)} 
    \vcr u_i^{(\vcr g)} {\vcr v_i^{(\vcr g)}}^{\mathsf T}.
\]
Then the 2D convolution can be composed via $R_{\mat F} R_{\mat G}$ 1D convolutions,
\begin{equation}
    \mat F \ast \mat G = \sum\limits_{i=1}^{R_{\mat F}}
    \sum\limits_{j=1}^{R_{\mat G}}
    \sigma_i^{(\vcr f)} \cdot \sigma_j^{(\vcr g)} 
    \big( \vcr u_i^{(\vcr f)} \ast {\vcr u_j^{(\vcr g)}} \big)
    \big( \vcr v_i^{(\vcr f)} \ast {\vcr v_j^{(\vcr g)}} \big)^{\mathsf T}.
\end{equation}
This approach is advantageous for matrices when $R_{\mat F} R_{\mat G}<n,r$,
and is particularly valuable for convolution of tensors
with low CP rank.

\section{Fast algorithm cost comparison} \label{sec:costs}
The bilinear rank of a convolution algorithm is most important for
understanding its asymptotic complexity, especially when the algorithm is used
in a nested manner.  However, the number of additions required for computing
linear combinations is nevertheless important and typically controls the
constant-factor on the leading order term in the algorithmic cost. The
composition of the bilinear algorithm, especially for larger convolution
problems, can significantly affect the number of additions and scalar
multiplications required to apply the linear combinations. Many bilinear
algorithms exhibit an inverse relationship between the bilinear rank and the
number of flops needed for applying the linear combination~\cite{survey3}.
Different decomposition of the same convolution can lead to varied amounts of
additions in the encoding and decoding step and bilinear ranks~\cite{opt}.  In
this section, we build upon previous examinations on the number of flops
required for various compositions of bilinear algorithms~\cite{survey3,opt} by
analyzing the number of element-wise multiplications as well as flops from
linear combinations. To do so, we pay particular attention to the structure of
the matrices of the bilinear algorithms. 

\subsection{Cost bounds for general bilinear algorithms}
For bilinear algorithms without structure, as in some variants of the Toom-Cook
and Winograd's convolution algorithm, a direct computation is needed. To bound
this cost, we will study the structure of the matrices from certain Toom-Cook
and Winograd's algorithms by counting the number of non--zeros as $\text{nnz}$
in the matrices $(\mat A, \mat B, \mat C)$.

For applying a matrix--vector product $\mat A \vcr x$, we can bound the number
of additions $a(\mat A)$ and multiplications $m(\mat A)$ as
\begin{equation} \label{cref:mach1}
    a(\mat A) \le \big(\text{nnz}(\mat A) - \#\text{row}(\mat A) \big) \text{ and } 
    m(\mat A) \le \text{nnz}(\mat A).
\end{equation}
We use an upper bound since the number of non--zeros does not necessarily
correspond to additions, since some of these can be reused for later
computation. The same bound can be applied for matrices $\mat B$ and 
$\mat C$.

We represent a bilinear algorithm $F$ by its encoding and decoding matrices, $F
= (\mat A, \mat B, \mat C)$.  In general, the rank $R$ of a bilinear algorithm
$F$ is the number of columns in matrices $\mat A$ and $\mat B$. With this
notation, we can count the number of flops needed for any non-nested bilinear
algorithm as
\begin{equation} \label{cref:mach2}
    a(F) \le a(\mat A) + a(\mat B) + a(\mat C) \text{ and } 
    m(F) \le m(\mat A) + m(\mat B) + m(\mat C) + R.
\end{equation}

\subsection{Costs of fast transform algorithms}
For a bilinear algorithm where the matrices $(\mat A,\mat B,\mat C)$ have an
inherent recursive structure, a divide-and-conquer approach, such as the FFT
and DCT, can yield asymptotically fast algorithms.
For the radix-2 FFT algorithm, the cost in terms of complex additions
$\tilde{a}(n)$ and multiplies $\tilde{m}(n)$, where $T(n) =
(\tilde{a}(n),\tilde{m}(n))$, is
\begin{align*}
    T(n) &= 2T(n/2) + (n/2,n/2) \quad \text{with} \quad T(2) = (0,2), \quad \text{so} \\
    T(n) &= (n(\log(n)-1)/2,n\log(n)/2).
\end{align*}

\subsection{Costs of multidimensional methods}
Given a bilinear algorithm 
$F_{(1)} = (\mat{A}_{(1)},\mat{B}_{(1)},\mat{C}_{(1)})$ and 
$F_{(2)} = (\mat{A}_{(2)},\mat{B}_{(2)},\mat{C}_{(2)})$, let
the Kronecker product of these bilinear algorithms be
$F = F_{(1)} \otimes F_{(2)} = 
(
    \mat{A}_{(1)} \otimes \mat{A}_{(2)},\mat{B}_{(1)} \otimes \mat{B}_{(2)} ,
    \mat{C}_{(1)} \otimes \mat{C}_{(2)}
)$. 
To bound the cost of the decoding matrix $\mat A$, we can
use~\cite[Theorem~22]{opt}, 
\begin{align*} \label{eq:mach3}
    a(\mat A) &= a(\mat{A}_{(1)}) \cdot \#\text{col}(\mat{A}_{(2)}) + 
    \#\text{row}(\mat{A}_{(1)}) \cdot a(\mat{A}_{(2)}) 
    \text{ and} \\
    m(\mat A) &= m(\mat{A}_{(1)}) \cdot \#\text{col}(\mat{A}_{(2)}) + 
    \#\text{row}(\mat{A}_{(1)}) \cdot m(\mat{A}_{(2)}).
\end{align*}
This bound also applies to matrices $\mat B$ and $\mat C$. The rank of the new
bilinear algorithm $F$ is the product of the two smaller ranks, $R =
R_{(1)}R_{(2)}$. This nesting of bilinear algorithms can be extended to higher
dimensions as well. Consider a set of nested bilinear algorithms $F_{(1)},
\dots, F_{(k)}$.  We bound the cost of applying the nested linear combinations,
similar to the 2D case in~\eqref{eq:mach3}.
\begin{claim}
    Given a nested bilinear algorithm $F_{(1)}\otimes \cdots \otimes F_{(k)}$, the cost for
    encoding with matrix 
    $\mat A = \mat{A}_{(1)} \otimes \dots \otimes \mat{A}_{(k)}$, 
    where we define cost as $T(\mat A) = (a(\mat A),m(\mat A))$, is
    \begin{equation}
        T(\mat A) = \sum\limits_{i=1}^k 
        \Big( T(\mat{A}_{(i)}) \cdot \prod_{j=1}^{i-1} \#\text{row}(\mat{A}_{(j)}) 
        \cdot \prod_{j=i+1}^{k} \#\text{col}(\mat{A}_{(j)}) \Big).
    \end{equation}
\end{claim}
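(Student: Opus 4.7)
The natural approach is induction on the nesting depth $k$, using the cited two-level Kronecker bound from~\cite[Theorem~22]{opt} as both the base case and the inductive mechanism.

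The plan is as follows. First, I would establish the base case $k=1$, which is immediate since the empty products equal one and the formula collapses to $T(\mat A) = T(\mat A_{(1)})$. The case $k=2$ is exactly the relation displayed just above the claim. For the inductive step, assume the formula holds for any nesting of depth $k-1$, and write
\[
    \mat A = \mat A_{(1)} \otimes \underbrace{(\mat A_{(2)} \otimes \cdots \otimes \mat A_{(k)})}_{\mat A'},
\]
using associativity of the Kronecker product. Applying the two-level bound to the pair $(\mat A_{(1)}, \mat A')$ gives
\[
    T(\mat A) = T(\mat A_{(1)}) \cdot \#\text{col}(\mat A') + \#\text{row}(\mat A_{(1)}) \cdot T(\mat A').
\]

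Next I would use the standard Kronecker-product dimension identities
\[
    \#\text{col}(\mat A') = \prod_{j=2}^{k}\#\text{col}(\mat A_{(j)}),
    \qquad
    \#\text{row}(\mat A') = \prod_{j=2}^{k}\#\text{row}(\mat A_{(j)}),
\]
and substitute the inductive hypothesis for $T(\mat A')$, namely
\[
    T(\mat A') = \sum_{i=2}^{k} T(\mat A_{(i)}) \cdot \prod_{j=2}^{i-1}\#\text{row}(\mat A_{(j)}) \cdot \prod_{j=i+1}^{k}\#\text{col}(\mat A_{(j)}).
\]
The first term on the right of the two-level bound contributes the $i=1$ summand of the target formula, since $\prod_{j=1}^{0}\#\text{row}(\mat A_{(j)}) = 1$. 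Multiplying the sum in the inductive hypothesis by the factor $\#\text{row}(\mat A_{(1)})$ folds that factor into the leading product $\prod_{j=1}^{i-1}\#\text{row}(\mat A_{(j)})$ of each summand, producing the $i = 2, \ldots, k$ terms of the claimed identity. Combining these yields the desired formula.

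There is no real obstacle here beyond careful index bookkeeping; the argument is purely structural and relies on two facts already available in the paper: associativity of $\otimes$ together with the multiplicativity of row/column counts, and the validated two-level additive cost formula. The same induction works verbatim for both $a(\cdot)$ and $m(\cdot)$ since the two-level bound is componentwise in $T = (a,m)$, and the only nontrivial check at each step is that the product over the empty range of indices (for $i=1$ and $i=k$) is correctly interpreted as $1$.
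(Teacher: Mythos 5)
Your induction is correct and is precisely the argument the paper leaves implicit: the paper states this claim without proof, merely remarking that the $k$-fold case is ``similar to the 2D case'' given by the two-level Kronecker bound of \cite[Theorem~22]{opt}, and your proof simply makes that generalization rigorous by peeling off one factor at a time via associativity of $\otimes$ and the multiplicativity of row/column counts. No gaps; the empty-product conventions at $i=1$ and $i=k$ are handled correctly, and the componentwise treatment of $T=(a,m)$ is sound.
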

The same cost can be applied for encoding with the matrix 
$\mat B = \mat{B}_{(1)} \otimes \dots \otimes \mat{B}_{(k)}$ and 
for decoding with matrix
$\mat C = \mat{C}_{(1)} \otimes \dots \otimes \mat{C}_{(k)}$. 
When the rank is greater than the input size, as is the case with linear
convolution, the amount of work grows with each level of recursion.
Consequently, the cost scales exponentially to the dimension of the problem.
Given two order $d$ tensors $\tsr{F},\tsr{G} \in \mathbb{R}^{\otimes_{i=1}^d n
}$, the complexity of a direct convolution method is $O(n^{d+1})$. However, the
multidimensional FFT can compute the convolution in $O(n^d \log n)$ time. As
discussed in~\cref{sec:lowrank}, the presence of low rank structure in
$\tsr{F}$ and $\tsr{G}$ enable algorithms to circumvent the exponential
scaling in $d$.

\subsection{Fast CNN algorithm costs} \label{sec:cnn_costs}
Both the training and inference with CNNs, which rely on a series of 2D 
convolutions, are computationally intensive. As noted in the introduction, the
convolutional layer can be the most expensive step. To better understand this
cost, we will extend our cost model for bilinear algorithms to bound the
costs of the convolutional layer in CNNs.  In \cref{eq:cnn_equation}, a CNN
performs many convolutions and adds them over multiple channels~\cite{lavin},
\begin{equation} \label{eq:bilinear_cnn}
    \mat{Y}^{(i,k,\tilde x,\tilde y)} =
    \mat C \Big[ 
        \sum\limits_{c=1}^H \big(\mat{A}^\mathsf T \mat{F}^{(k,c)}\mat{A}\big)
        \odot \big(\mat{B}^\mathsf T\mat{G}^{(i,c,\tilde x, \tilde y)}\mat{B}\big) \Big] \mat{C}^\mathsf T,
\end{equation}
where the indices $\tilde x, \tilde y$ represents the different partitions of a
2D slice $\tsr{G}$ to be convolved with a 2D slice of $\tsr{F}$. Unlike
signal processing, convolutions in CNNs are associated with:
\begin{enumerate}
    \item filters (kernels) that are often much smaller than the image,
    \item filters that are reused in many convolutions,
    \item separate convolutions over multiple channels are added altogether.
\end{enumerate}
To prevent redundant transformations of both the filter and image tensor
slices, one can separately transform the filter and image and store the outputs
in the tensors $\tsr U$ and $\tsr{V}$ respectively~\cite{lavin}. The
element-wise multiplications are then computed by
\begin{equation} \label{eq:cnn_product_step}
    \mat{M}^{(i,k,\tilde x, \tilde y)} = 
    \Big[ \sum\limits_{c = 1}^H \mat{U}^{(k,c)} \odot 
    \mat{V}^{(c,l)} \Big].
\end{equation}
The tensor $\tsr{M}$ stores the output matrices from the convolutions between
every combination of the $N$ images, $K$ filters, and $P = D_HD_W/m^2$
partitions of $\tilde{x}$ and $\tilde{y}$, where $D_W \times D_H$ is the
dimension of the inputs images and $m$ is the output size of each correlation
convolution. Within the bracket of \cref{eq:cnn_product_step}, each of the $H$
channels needs to perform a convolution. Given a bilinear algorithm $F = (\mat
A,\mat B,\mat C)$ to compute the convolution of \cref{eq:cnn_product_step}, the
cost of the convolutional layer in the CNN is the sum of $T(D)$ (cost of image
transformations), $T(F)$ (cost of filter transformations), $T(I)$ (cost of
inverse transformations), and $T(M)$ (cost of the bilinear multiplications),
where 
\begin{align*} \label{eq:cnncost}
    T(F) &= KH \cdot T(\mat A), \\
    T(D) &= PNH \cdot T(\mat B), \\
    T(M) &= PKHN\cdot R^2, \text{ and} \\
    T(I) &= PKN \cdot T(\mat C).
\end{align*}
The above cost is identical to the cost model proposed
in~\cite[Equation~23]{lavin}, which is of the form,
\begin{equation}
    \alpha'(1 + \beta'/K + \gamma'/P + \delta'/H)ND_HD_WHK,
\end{equation}
where $\alpha' = R^2/m^2$, $\beta' = T(\mat B)/R^2$,
$\gamma' = T(\mat A)/R^2$, and $\delta' = T(\mat C)/R^2$.

As noted in~\cite{lavin}, the $PKHNR^2$ element-wise multiplications
in~\cref{eq:cnn_product_step} can be transformed into a multiplication 
between matrices of size $K \times H$ and $H \times PN$. By applying a fast
matrix-multiplication algorithm such as Strassen's
algorithm~\cite{Strassen_1969}, the bilinear rank of this algorithm can be
asymptotically smaller than a direct computation.

\subsection{Generating fast algorithms for CNNs}
The algorithm analyzed in~\cref{sec:cnn_costs} is one of a handful of
approaches to apply the convolutional layer. Other prominent libraries for
convolution employ an optimized direct computation or the FFT~\cite{fftw}. It
is not immediately clear which algorithm has the most optimal performance, as
experimental results~\cite{perform1,fft_better} highlight the mixed
performances of each approach. As CNNs adopt new approximation techniques such
as quantization and as scientific domains require different accuracy guarantees
for the CNN to converge, understanding trade--offs between the cost and
numerical accuracy can simplify the search for the optimal convolution
algorithm.

To quantify the costs for each convolution algorithm, it is important to
uncover both the structure of a bilinear algorithm $(\mat A,\mat B,\mat C)$ and
its bilinear rank. These values determine the overhead of the encoding/decoding 
step and the asymptotic complexity of the algorithm respectively. In the tables
below, we calculate the number of non--zeros to estimate the number of
additions and multiplies needed. We examine bilinear algorithms for varying
sizes of $n$, enforcing $n=r$ so that the two encoding matrices are identical.
Therefore, it suffices to only list the structure of $\mat A$ and $\mat C$. 

We first analyze the Toom-Cook bilinear algorithms from~\cref{sec:fastImp} and
detail their structure in~\cref{tab:results1}. This family of algorithms
encompass the popular Winograd--based algorithm for small
convolutions~\cite{lavin} and serve as a control to compare to. Next, we
consider the structure of Winograd's convolution algorithm
from~\cref{sec:fastMod}. The series of bilinear algorithm's structure and the
polynomials used to generate them are recorded in~\cref{tab:results3}. In
particular, we note that the Winograd algorithms we study here employ
at least one \textit{superlinear polynomial}~\cite{beyond}, or a polynomial
whose degree is greater than one, whereas the popular Winograd--based
methods~\cite{lavin} only use polynomials of degree one. Finally, we list the
structure of nested Toom-Cook algorithms from~\cref{sec:multidimensional}
in~\cref{tab:results2}.

\begin{table}[htb] 
\caption{(nnz, adds, mults) of Toom-Cook-based linear convolution algorithms.
    The nodes are chosen to be small integers and the $\infty$ point, e.g. for
    $n=5$, we have $0,1,-1,2$, and $\infty$.} 
\label{tab:results1} 
\centering 
\begin{tabular}{rccccc} 
\noalign{\smallskip} \hline \hline
\noalign{\smallskip} 
    $n$ & $\mat A$ & $\mat C$ & Rank \\
    \hline 
2 & $(4,1,4)$ & $(5,2,5)$ & $3$ \\
3 & $(11,6,11)$ & $(16,11,16)$ & $5$ \\
4 & $(22,15,22)$ & $(36,29,36)$ & $7$ \\
5 & $(37,28,37)$ & $(65,56,65)$ & $9$ \\
6 & $(56,45,56)$ & $(101,90,101)$ & $11$ \\ 
7 & $(79,66,79)$ & $(145,132,145)$ & $13$ \\
8 & $(106,91,106)$ & $(197,182,197)$ & $15$ \\
9 & $(137,120,137)$ & $(257,240,257)$ & $17$
\\ \noalign{\smallskip} \hline
\noalign{\smallskip} 
\end{tabular} 
\end{table} 
\begin{table}[htb] 
\caption{(nnz, adds, mults) of Winograd-based linear convolution algorithms and
    its polynomial polynomials $m^{(i)}$. For a convolution between two vectors
    of size $n$, the algorithm uses the polynomials from ``Additional
    $m^{(i)}s$'' plus the polynomials used for a convolution of size $n-1$. For
    example, the polynomials for $n=4$ include $x^2+1,x,x+1,x-1,x+2$ and
    $x-2$.}
\label{tab:results3} 
\centering 
\begin{tabular}{rcccc} 
\noalign{\smallskip} \hline \hline
\noalign{\smallskip} 
    $n$ & Additional $m^{(i)}$s & $\mat A$ & $\mat C$ & Rank \\
    \hline 
    $2$ & $x^2+1,x$ & $(5,1,5)$ & $(7,4,7)$ & $4$ \\ 
    $3$ & $x+1,x-1$ & $(13,7,13)$ & $(20,15,20)$ & $6$ \\ 
    $4$ & $x+2,x-2$ & $(25,17,25)$ & $(39,32,39)$ & $8$ \\ 
    $5$ & $x+1/2,x-1/2$ & $(41,31,41)$ & $(72,63,72)$ & $10$ \\ 
    $6$ & $x+4,x-4$ & $(61,49,61)$ & $(107,96,107)$ & $12$ \\ 
    $7$ & $x+1/4,x-1/4$ & $(85,71,85)$ & $(156,143,156)$ & $14$ \\ 
    $8$ & $x^2+2$ & $(113,96,113)$ & $(216,201,216)$ & $17$ \\ 
    $9$ & $x^2+1/2$ & $(145,125,145)$ & $(288,271,288)$ & $20$
\\ \noalign{\smallskip} \hline
\noalign{\smallskip} 
\end{tabular} 
\end{table} 
\begin{table}[htb] 
\caption{(nnz, adds, mults) of nested Toom-Cook-based linear convolution
    algorithms.  The nesting denotes the size Toom--Cook algorithms we nested
    together to construct that particular nested Toom--Cook algorithm.}
\label{tab:results2} 
\centering 
\begin{tabular}{rccccc} 
\noalign{\smallskip} \hline \hline
\noalign{\smallskip} 
    $n$ & Nesting & $\mat A$ & $\mat C$ & Rank \\
    \hline 
    $4$ & $2 \times 2$ & $(16,7,16)$ & $(25,18,25)$ & $9$ \\ 
    $6$ & $2 \times 3$ & $(44,29,44)$ & $(76,65,76)$ & $15$ \\ 
    $8$ & $2 \times 4$ & $(88,67,88)$ & $(162,147,162)$ & $21$ \\ 
    $8$ & $2 \times 2 \times 2$ & $(64,37,64)$ & $(125,110,125)$ & $27$ \\
    $9$ & $3 \times 3$ & $(121,96,121)$ & $(228,211,228)$ & $25$
\\ \noalign{\smallskip} \hline
\noalign{\smallskip} 
\end{tabular} 
\end{table} 
When comparing the three algorithms' properties, the Toom--Cook bilinear
algorithms have a lower bilinear rank than both the Winograd and nested
algorithms. While the nested algorithms have sparser encoding and decoding
matrices than both the Toom--Cook and Winograd algorithms, its rank is
significantly larger. Since the bilinear rank of $d$--dimensional problems is
$R^d$, where $R$ is the rank of the 1D bilinear algorithms used, we see the
nested algorithm may be less efficient for higher dimensional problems.
Nevertheless, for 2D and 3D problems where $n$ is not too large, the cost of
the Winograd and nested algorithms are within an additive factor of the cost
for the Toom--Cook algorithm. So, we see all three algorithms can save upwards
of $2-4 \times$ flops when compared to the DFT/FFT approach for sufficiently
small $n$.

One may question the merit of the Winograd or nested algorithm since they both
require more flops than the Toom--Cook method. The main advantage, as we
discuss more in depth in the next two sections, is that both the Winograd and
nested algorithms have improved numerical accuracy, especially when $n \geq 4$.
The underlying reason for the Toom-Cook method's instability is its use of the
Vandermonde matrix, which is ill--conditioned~\cite{vander,vander2}. Both the
Winograd and nested algorithms circumvent this issue by \textit{combining} a
series of small Toom-Cook algorithms. For the Winograd algorithm in particular,
having just one superlinear polynomial can greatly improve the
conditioning~\cite{beyond}.

In short, we find that our proposed algorithms, namely the Winograd and nested
Toom--Cook algorithms, may be beneficial when the filters are medium sized,
roughly $4 \leq r \leq 10$, and when numerical accuracy is of utmost
importance. On the other hand, for smaller problems where $r=2,3$, the
Toom--Cook method is still well--conditioned and may be the optimal algorithm
of choice. For larger problems, such as when $r \geq 20$, the FFT is the
obvious choice due to its asymptotic cost and unconditional stability.

\section{Fast algorithm accuracy comparison} \label{sec:stability}
Although fast bilinear algorithms compute convolution in asymptotically less
time than the direct approach, the use of linear combinations can introduce
considerable error from floating--point arithmetic, especially when the
multiplicative constants are large. For example, algorithms that use the
Vandermonde matrix directly for its encoding and decoding step may incur too
large of an error to be used in practice. Consequently, Toom-Cook methods 
(including the Winograd--based methods~\cite{lavin}) for medium to large
convolutions may not be stable and therefore are rarely used for inputs larger
than four \cite{winograd,lavin,nodes}.  Although a CNN can still perform well
under substantial error \cite{courbariaux2014low}, other applications of
convolution, such as in cosmology and physics, must have highly accurate
convolutions.

Well--posedness and conditioning of convolution depends on the variant of the
problem. The structured matrix formulations given
in~\cref{tab:variant_equations} are useful for reasoning about conditioning.
With the worst-case choice of both inputs, cyclic convolution is ill-posed
since if each $f_i=1$ then $\mat{C}_{\langle \vcr f \rangle}$ is rank
deficient.  On the other hand, the trapezoidal structure of $\tpl fn$ implies
that it is full rank unless $\vcr{f} = \vcr 0$.  Nevertheless, linear
convolution can be ill-conditioned for certain choices of
inputs~\cite{milinazzo1987rate}. To maintain generality across variants, we
focus on bounding the absolute error associated with a bilinear algorithm for
convolution. As with matrix products~\cite{err_bounds}, the use of mixed norms
yields a constant factor proportional to the input size. Below, we give a
simple bound for general bilinear algorithms based on only the 2-norm, similar
to the bound derived in~\cite{err1}.
\begin{theorem}[1D bilinear algorithm convolution error] \label{thm:accuracy_1d}
    Given inputs $\vcr f \in \mathbb{R}^r$ and $\vcr g \in \mathbb{R}^n$, 
    and perturbations $\delta \vcr f$, $\delta \vcr g$ such that
    $\|\delta \vcr f\| \leq \varepsilon\|\vcr f\|$ and $\|\delta \vcr g\| \leq \varepsilon\|\vcr g\|$, the
    absolute error of the bilinear algorithm $(\mat A,\mat B,\mat C)$ is
    \begin{equation}
    \begin{gathered}
        \|\delta \vcr y\| \le 
        2\big(\|\mat C\| \cdot \|\mat A\| \cdot \|\mat B\| 
        \cdot \|\vcr f\| \cdot \|\vcr g\| \big) 
        \varepsilon + O(\varepsilon^2),
    \end{gathered}
    \end{equation}
    where $\| \cdot \|$ is the 2-norm.
\end{theorem}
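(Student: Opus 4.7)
The plan is to expand the perturbed output in the two input perturbations, separate the first-order and second-order contributions, and then bound each piece using standard operator-norm and Hadamard-product inequalities. Starting from the defining relation $\vcr y = \mat C\big[(\mat A^\mathsf T \vcr f) \odot (\mat B^\mathsf T \vcr g)\big]$, I would substitute $\vcr f + \delta \vcr f$ and $\vcr g + \delta \vcr g$, use linearity of $\mat A^\mathsf T(\cdot)$ and $\mat B^\mathsf T(\cdot)$ and distributivity of the Hadamard product over addition, and cancel the unperturbed term $\vcr y$. This produces the clean decomposition
\[
    \delta \vcr y = \mat C\Big[(\mat A^\mathsf T \vcr f)\odot(\mat B^\mathsf T \delta \vcr g) + (\mat A^\mathsf T \delta \vcr f)\odot(\mat B^\mathsf T \vcr g) + (\mat A^\mathsf T \delta \vcr f)\odot(\mat B^\mathsf T \delta \vcr g)\Big],
\]
where the first two summands are first order in $\varepsilon$ and the third is quadratic.

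Next I would apply the triangle inequality together with submultiplicativity of the 2-norm, $\|\mat M \vcr z\| \le \|\mat M\|\|\vcr z\|$, to pull $\|\mat C\|$ outside and then reduce each Hadamard term. The key auxiliary inequality I need is $\|\vcr a \odot \vcr b\|_2 \le \|\vcr a\|_2 \|\vcr b\|_2$, which follows from $\sum_i a_i^2 b_i^2 \le \big(\max_i a_i^2\big)\sum_i b_i^2 = \|\vcr a\|_\infty^2 \|\vcr b\|_2^2$ and $\|\vcr a\|_\infty \le \|\vcr a\|_2$. Applied together with submultiplicativity, each first-order term is bounded by $\|\mat A\|\|\mat B\|\|\vcr f\|\|\delta \vcr g\|$ or $\|\mat A\|\|\mat B\|\|\delta \vcr f\|\|\vcr g\|$, and the quadratic term by $\|\mat A\|\|\mat B\|\|\delta \vcr f\|\|\delta \vcr g\|$.

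Finally I would substitute the hypotheses $\|\delta \vcr f\| \le \varepsilon\|\vcr f\|$ and $\|\delta \vcr g\| \le \varepsilon\|\vcr g\|$. The two first-order terms each contribute exactly $\varepsilon\|\mat C\|\|\mat A\|\|\mat B\|\|\vcr f\|\|\vcr g\|$, summing to the factor of $2$ in the stated bound, while the third term contributes $\varepsilon^2\|\mat C\|\|\mat A\|\|\mat B\|\|\vcr f\|\|\vcr g\|$, which is absorbed into $O(\varepsilon^2)$. The only nonroutine step is the Hadamard product norm bound; everything else is direct manipulation. I do not expect a real obstacle, since the strategy is the standard first-order perturbation argument for bilinear forms and the bound is intentionally loose (with $\|\vcr a\|_\infty$ relaxed to $\|\vcr a\|_2$) in order to keep the statement purely in the 2-norm.
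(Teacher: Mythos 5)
Your proposal is correct and follows essentially the same route as the paper's proof: the same three-term decomposition of $\delta \vcr y$, the same Hadamard-product norm inequality $\|\vcr a \odot \vcr b\| \le \|\vcr a\|\cdot\|\vcr b\|$ (which the paper derives directly from $\sum_i |a_ib_i|^2 \le (\sum_i|a_i|^2)(\sum_i|b_i|^2)$ rather than via the $\ell_\infty$ bound, an immaterial difference), followed by submultiplicativity and substitution of the perturbation hypotheses. No gaps.
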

\begin{proof}
    We have that
    \begin{align*}
        \delta \vcr y 
        &= \mat C^T \big( 
        (\mat A \delta \vcr f) \odot (\mat B \vcr g) 
        + (\mat A \vcr f) \odot (\mat B \delta \vcr g) 
        + (\mat A \delta \vcr f) \odot (\mat B \delta \vcr g)\big).
    \end{align*}
    Now, since
    \[
        \|\vcr x \odot \vcr y\|^2 
        = \sum_i|x_iy_i|^2 
        \leq \big(\sum_i |x_i|^2\big)\big(\sum_i |y_i|^2\big) 
        = \|\vcr x\|^2\cdot \|\vcr y\|^2,
    \]
    we have  $\|\vcr x \odot \vcr y\| \leq \|\vcr x\|\cdot \|\vcr y\|.$
    Therefore,
    \begin{align*}
        \|\delta \vcr y\| 
        &\leq \|\mat C\| \cdot \big( \|\mat A \delta \vcr f\| 
        \cdot \|\mat B \vcr g\| +\| \mat A \vcr f\| \cdot \|\mat B \delta \vcr g\|\big) 
        + O(\epsilon^2),
    \end{align*}
    and the bound in the theorem follows by basic matrix and vector norm inequalities.
\end{proof}
This error bound can be extended to higher dimensions as well~\cite{err1}.
\begin{corollary} \label{cor:accuracy_nd}
    The  convolution between order $d$ inputs
    $\tsr{F} \in \mathbb{R}^{r\times \dots \times r}$ and 
    $\tsr{G} \in \mathbb{R}^{n\times \dots \times n}$
    yielding the tensor
    $\tsr{Y} = \tsr{F} \ast \tsr{G}$ using the nested 1D 
    algorithm $(\mat A,\mat B,\mat C)$ has an error of
    \begin{equation}
            ||\delta \tsr Y||_F \le 
            2\big(||\mat C||^d \cdot ||\mat A||^d \cdot 
            ||\mat B||^d \cdot ||\tsr F||_F \cdot 
            ||\tsr G||_F \big)\varepsilon + O(\varepsilon^2).
    \end{equation}
\end{corollary}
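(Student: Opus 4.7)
The plan is to reduce the multidimensional case to an instance of Theorem~\ref{thm:accuracy_1d} by viewing the nested bilinear algorithm as a single 1D bilinear algorithm acting on the vectorizations of the input and output tensors. By the construction in Section~\ref{sec:multidimensional}, the $d$-dimensional nested algorithm associated with $(\mat A, \mat B, \mat C)$ has encoding and decoding matrices given by the $d$-fold Kronecker products $\mat{A}^{\otimes d}, \mat{B}^{\otimes d}, \mat{C}^{\otimes d}$. Writing $\vcr f = \text{vec}(\tsr F)$, $\vcr g = \text{vec}(\tsr G)$, and $\vcr y = \text{vec}(\tsr Y)$, the nested computation is exactly the 1D bilinear algorithm $(\mat{A}^{\otimes d}, \mat{B}^{\otimes d}, \mat{C}^{\otimes d})$ applied to $\vcr f$ and $\vcr g$.

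Next I would apply Theorem~\ref{thm:accuracy_1d} directly to this 1D instance, which yields
\begin{equation*}
\|\delta \vcr y\| \le 2 \,\|\mat{C}^{\otimes d}\|\cdot \|\mat{A}^{\otimes d}\|\cdot \|\mat{B}^{\otimes d}\|\cdot \|\vcr f\|\cdot \|\vcr g\|\,\varepsilon + O(\varepsilon^2).
\end{equation*}
The tensor error bound then follows from two standard facts. First, vectorization is an isometry between the Frobenius norm and the 2-norm, so $\|\vcr f\| = \|\tsr F\|_F$, $\|\vcr g\| = \|\tsr G\|_F$, and $\|\delta \vcr y\| = \|\delta \tsr Y\|_F$. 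Second, the spectral norm is multiplicative under Kronecker products: $\|\mat{M} \otimes \mat{N}\|_2 = \|\mat{M}\|_2 \cdot \|\mat{N}\|_2$. Applying this identity $d-1$ times gives $\|\mat{A}^{\otimes d}\| = \|\mat A\|^d$, and similarly for $\mat B$ and $\mat C$. Substituting completes the bound.

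The only mild subtlety is the hypothesis that the relative perturbations of the vectorized inputs inherit from the relative perturbations of the tensor inputs in Frobenius norm, i.e., $\|\delta \vcr f\| = \|\delta \tsr F\|_F \le \varepsilon \|\tsr F\|_F = \varepsilon \|\vcr f\|$, and similarly for $\vcr g$; this is immediate from the isometry. I expect no serious obstacles: once the equivalence between the nested bilinear algorithm and the Kronecker-structured 1D algorithm is spelled out, the proof is essentially a one-line application of Theorem~\ref{thm:accuracy_1d} combined with the Kronecker-norm identity. The main thing to be careful about is presenting the vectorization step cleanly so that the $O(\varepsilon^2)$ remainder absorbs cross terms arising from the $d$-fold structure without further bookkeeping.
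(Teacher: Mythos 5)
Your proposal is correct and matches the approach the paper itself relies on: the paper omits an explicit proof of this corollary (deferring to its cited reference), but its own use of the result in the proof of Theorem~\ref{thm:overlap_err_bound} treats the nested algorithm exactly as the 1D bilinear algorithm $(\mat A^{\otimes d},\mat B^{\otimes d},\mat C^{\otimes d})$ applied to vectorized tensors, with the bound following from Theorem~\ref{thm:accuracy_1d}, the multiplicativity of the spectral norm under Kronecker products, and the Frobenius/2-norm isometry of vectorization. No gaps.
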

\Cref{cor:accuracy_nd} shows that the error is proportional to the norm of the
bilinear algorithm's matrices $(\mat A,\mat B,\mat C)$ and exponential to the
dimension of the problem. For algorithms like the Toom-Cook
method~\cref{sec:toomcook}, $\mat{A}$ and $\mat{B}$ are submatrices of a
Vandermonde matrix and $\mat{C}$ is its inverse. Consequently, the absolute
error of Toom-Cook convolution scales with the condition number of the
Vandermonde matrix. When the nodes used for interpolation--based methods are
restricted to real values, the condition number of the resulting Vandermonde
matrix will be exponential in its dimension~\cite{vander}. Therefore, the
Toom-Cook method with real--valued interpolation nodes will produce encoding
matrices whose norm is exponential to the problem size. Selecting complex nodes
can fix the ill-conditioning (e.g., via DFT~\cref{sec:transforms}), but smarter
selections of real nodes can also somewhat improve the conditioning.

\subsection{Improved accuracy by nodes and scaling} 
Chebyshev nodes are real--valued nodes that can improve the conditioning of the
Vandermonde matrix without requiring additional costs. While these nodes
produce matrices with smaller condition numbers than that of integer nodes, the
condition number still grows exponentially with respect to the dimension of the
inputs~\cite{vander2}. Empirical experiments based on exhaustive search show
that choosing nodes with few significant mantissa bits and ``symmetric'' nodes,
or nodes that are the negative, reciprocal, and negative reciprocal of
previously chosen nodes, will yield better conditioned matrices~\cite{err1}.
For instance, the norm of the Vandermonde matrix can remain relatively low when
selecting the points $2, -2, 1/2, \text{ and } -1/2$. Finding a ``widely
accepted strategy for selecting [good] points'' without using the complex
domain is an open question~\cite{err1}.

Another technique to improve accuracy is diagonal scaling~\cite{nodes,diag1}.
Diagonal scaling introduces a diagonal matrix multiplication to each of the
matrices $(\mat A,\mat B,\mat C)$ while preserving the correct convolution
output. This scaling reduces the magnitude of the entries in the matrices,
which can improve the condition number of the matrices.  By empirically
identifying the best weights for the diagonal matrix, diagonal scaling reduces
the maximum relative error of convolution. Experimental results of AlexNet
show that for a correlation convolution algorithm with filter of size $r=5$ and
output of size $m=9$ and using well-chosen nodes, diagonal scaling can reduce
the maximum relative error from $7.53 \times 10^{-2}$ to $5.49 \times
10^{-4}$~\cite{nodes}. By comparison, a direct computation of the correlation
algorithm achieves a maximum relative error of $2.81 \times 10^{-6}$.

\subsection{Improved accuracy by small nested convolutions} 
When highly accurate convolution algorithms are needed, well--chosen nodes may
not offer enough norm reductions to significantly reduce the error. Instead,
another strategy, proposed in signal processing, is to break a long convolution
into a series of smaller ones. To illustrate why this works, recall that the
condition number of an $n\times n$ Vandermonde matrix is exponential to its
input size. When selecting integer points, the conditioning of the Vandermonde
matrix $V$ is $\Omega\big(n^n \big)$~\cite{vander}. Instead, if the bilinear
algorithm is decomposed from an $n = n_1n_2$-length convolution into a sequence
of $n_1$-sized convolution nested with $n_2$-sized convolution, the condition
number of the nested Vandermonde matrix by a Kronecker product is
$\Omega\big((n_1+ n_2)^{n_1+n_2}\big)$. By repeating this decomposition, the
accuracy of fast convolution algorithms that rely on the Vandermonde matrix can
be greatly improved.

In order to devise such nested algorithms, we can employ the overlap-add
approach for linear convolution and the Agarwal-Cooley algorithm for cyclic
convolution from \cref{sec:multidimensional}. The error bound of using the
Agarwal-Cooley algorithm is identical to \cref{cor:accuracy_nd}, as the
additional permutation matrices $\mat{P}$ have a norm of $1$.  For the
overlap-add approach, the recomposition matrix $\mat{Q}^{(\gamma,\eta)}$
introduces some floating-point error due to its additions at the end of each
nested convolution. This error is relatively small as long as the dimension
size is not too large, as shown by the following bound.
\begin{theorem} \label{thm:overlap_err_bound}
    Given inputs $\vcr{f} \in \mathbb{R}^{n^d}$ and 
    $\vcr{g} \in \mathbb{R}^{n^d}$ 
    and perturbations $\delta \vcr f$, $\delta \vcr g$ such that
    $\|\delta \vcr f\| \leq \varepsilon\|\vcr f\|$ and $\|\delta \vcr g\| \leq \varepsilon\|\vcr g\|$,
    the convolution of $n$-dimensional vectors based on the 1D linear convolution bilinear algorithm
    $(\mat A, \mat B, \mat C)$ nested using the overlap-add method
    has an error of
    \[
        ||\delta \vcr y|| \le 2^{d/2 + 1} \cdot ||\mat C||^d \cdot 
        ||\mat A||^d \cdot ||\mat B||^d \cdot
        ||\vcr f|| \cdot ||\vcr g|| \cdot \varepsilon 
        + O(\varepsilon^2).
    \]
\end{theorem}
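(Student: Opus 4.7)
The plan is to reduce the error analysis to \Cref{cor:accuracy_nd} applied to a fully-unfolded $d$-dimensional convolution, and then absorb the overlap-add recomposition into a multiplicative norm factor.

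First, I would iteratively apply the preceding overlap-add theorem to rewrite the length-$n^d$ linear convolution of $\vcr f$ and $\vcr g$ as
\[
    \vcr y = \mat{\tilde Q}\,\text{vec}(\tsr{\tilde Y}),
\]
where $\tsr{\tilde Y} = \tsr{\tilde F} \ast \tsr{\tilde G}$ is a $d$-dimensional linear convolution of the natural reshapes $\tsr{\tilde F}, \tsr{\tilde G} \in \mathbb{R}^{n \times \cdots \times n}$, computed by the nested bilinear algorithm with matrices $\mat A^{\otimes d}$, $\mat B^{\otimes d}$, $\mat C^{\otimes d}$, and $\mat{\tilde Q}$ is the composite recomposition operator obtained as a Kronecker product of per-mode overlap-add matrices of the form $\mat Q^{(\gamma,\eta)}$. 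Because reshape preserves 2-norms, $\|\tsr{\tilde F}\|_F = \|\vcr f\|$ and $\|\tsr{\tilde G}\|_F = \|\vcr g\|$.

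Next, I would bound the spectral norm of each $\mat Q^{(\gamma,\eta)}$. Inspecting the block structure in~\eqref{eq:overlap_indices}, every column of $\mat Q^{(\gamma,\eta)}$ contains exactly one nonzero entry (equal to $1$), while every row contains at most two nonzero entries (occurring where adjacent $\mat I_{\eta-1}$ blocks overlap). Hence $\|\mat Q^{(\gamma,\eta)}\|_1 = 1$ and $\|\mat Q^{(\gamma,\eta)}\|_\infty \le 2$, which gives $\|\mat Q^{(\gamma,\eta)}\|_2 \le \sqrt{\|\mat Q^{(\gamma,\eta)}\|_1 \cdot \|\mat Q^{(\gamma,\eta)}\|_\infty} \le \sqrt 2$. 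Since the spectral norm is multiplicative under Kronecker products, $\|\mat{\tilde Q}\|_2 \le 2^{d/2}$.

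Finally, I would apply \Cref{cor:accuracy_nd} to the $d$-dimensional convolution $\tsr{\tilde Y}$ to obtain
\[
    \|\delta\tsr{\tilde Y}\|_F \le 2\,\|\mat C\|^d \|\mat A\|^d \|\mat B\|^d \|\vcr f\|\|\vcr g\|\,\varepsilon + O(\varepsilon^2),
\]
and propagate this through $\mat{\tilde Q}$: since the entries of every $\mat Q^{(\gamma,\eta)}$ lie in $\{0,1\}$, its application contributes no new leading-order rounding, so $\|\delta\vcr y\| \le \|\mat{\tilde Q}\|_2\,\|\delta\tsr{\tilde Y}\|_F$. Chaining these bounds yields the claimed constant $2\cdot 2^{d/2} = 2^{d/2+1}$. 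The main obstacle will be formalizing the recursive unfolding so that $\mat{\tilde Q}$ really factors as a Kronecker product of the single-mode $\mat Q^{(\gamma,\eta)}$ matrices and commutes cleanly with the nested bilinear algorithm rather than interacting with it; this is essentially a bookkeeping exercise of matching the order in which successive applications of the overlap-add theorem permute modes, after which the norm estimates combine multiplicatively with no further work.
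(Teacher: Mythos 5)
Your proposal is correct and follows essentially the same route as the paper: both apply \cref{cor:accuracy_nd} to the nested $d$-dimensional algorithm, bound each overlap-add matrix by $\|\mat Q^{(\gamma,\eta)}\|\le\sqrt2$, and combine the per-mode factors multiplicatively through the Kronecker structure to obtain the constant $2^{d/2+1}$. Your justification of the $\sqrt2$ bound via $\sqrt{\|\mat Q\|_1\|\mat Q\|_\infty}$ is in fact slightly more careful than the paper's remark that each row has at most two ones, since it also uses that each column has a single nonzero.
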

\begin{proof}
    Let $\mat{Q}^{(i)} = \mat{Q}^{(\gamma^{(i)},\eta^{(i)})}$ be the overlap-add
    matrix for the $i$th level of the nested bilinear algorithm. Using
    \cref{cor:accuracy_nd}, we have that to first order in $\varepsilon$,
        \begin{align*}
        ||\delta \mat y|| &\le 
        2 ||\mat{Q}^{(d)} \mat C \otimes \dots \otimes \mat{Q}^{(1)} \mat C||
        \cdot || \mat A \otimes \dots \otimes \mat A || 
        \cdot 
        || \mat B \otimes \dots \otimes \mat B || \cdot || \vcr f || 
        \cdot ||\vcr g|| \cdot \varepsilon.
        \end{align*}
    Notice that $ || \mat{Q}^{(i)} || \le \sqrt{2}$ since
    each row has at most two ones. Simplifying leads to the bound in the
    theorem.
\end{proof}

\subsection{Orthogonal polynomials as a basis} 
Decomposing a long convolution into a series of small nested convolutions can
help us achieve highly accurate convolution. For cases where we need very
accurate convolutions, one approach is to simply use the DFT. The discrete
Fourier matrix has bounded conditioning, making it the ideal choice when
accuracy is imperative.  For cases where we want the same accuracy without use
of complex arithmetic, we can instead use orthogonal polynomials.

By using orthogonal polynomials to define the encoding and decoding matrices
$(\mat A,\mat B,\mat C)$, the resulting matrices are generally
well-conditioned. The trade--off is that the input must be converted to and
from its monomial basis to the orthogonal basis. For certain orthogonal
polynomials, this conversion introduces large multiplicative
scalars~\cite{ortho}, thereby negating the accuracy of the orthogonal
polynomials.  One approach to circumvent the cost of basis transformation is to
leverage the embedding of convolution in a monomial basis within convolution in
a Chebyshev basis, which corresponds to the use of DCT for linear convolution
(described in~\cref{subsec:dct}).

\section{Numerical experiments} \label{sec:experiments}
We provide experimental results on the numerical accuracy of the following
bilinear algorithms for linear convolution: Toom-Cook with integer nodes,
Toom-Cook with Chebyshev nodes, Winograd convolution algorithm with superlinear
polynomial divisors, and the nested Toom-Cook method. All the code is written
in Python with NumPy. The inputs are composed of randomly chosen real numbers
from the set $[0,1)$.  We use NumPy's \texttt{seed()} function with a seed of
$1$ to ensure these results are reproducible. To calculate the relative error, we
compute a convolution from a bilinear algorithm using
\texttt{compute\_bilinear\_algorithm()} and compare it with the convolution
from a direct computation using \texttt{direct\_conv()}\footnote{Methods
available in \textit{test.py} from \\
\url{https://github.com/jucaleb4/Bilinear-Algorithms-for-Convolution}}. 

\begin{figure}
\centering
\begin{subfigure}{.5\textwidth}
  \centering
  \includegraphics[width=1.0\linewidth]{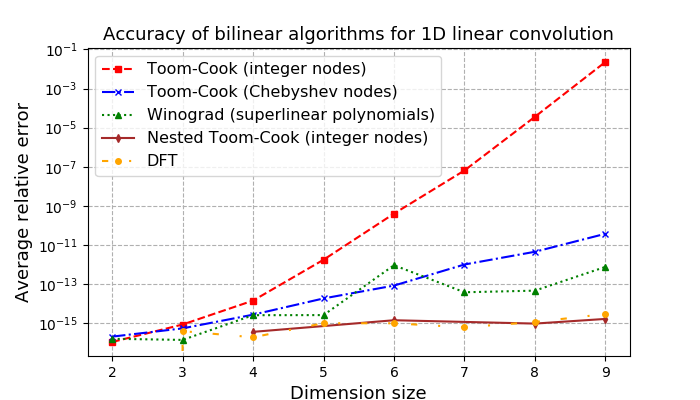}
  \caption{1D convolution}
  \label{fig:fig1a}
\end{subfigure}%
\begin{subfigure}{.5\textwidth}
  \centering
  \includegraphics[width=1.0\linewidth]{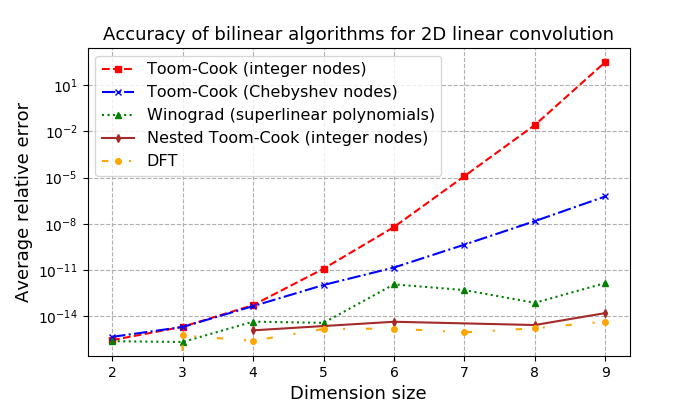}
  \caption{2D convolution}
  \label{fig:fig1b}
\end{subfigure}
\centering
\begin{subfigure}{.5\textwidth}
  \centering
  \includegraphics[width=1.0\linewidth]{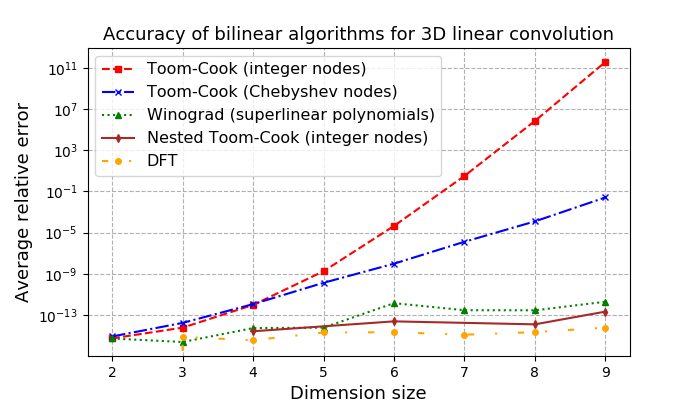}
  \caption{3D convolution}
  \label{fig:fig1c}
\end{subfigure}%
\begin{subfigure}{.5\textwidth}
  \centering
  \includegraphics[width=1.0\linewidth]{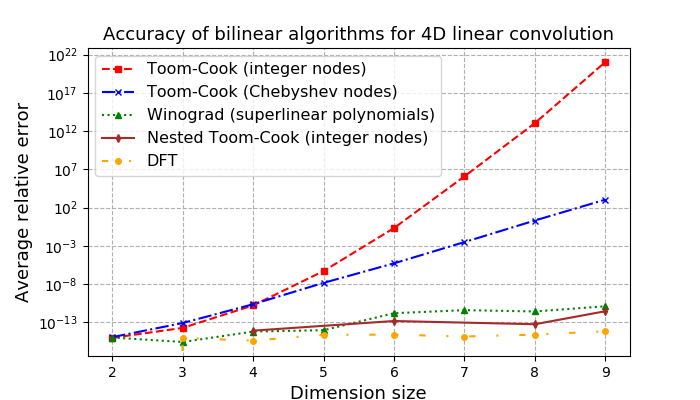}
  \caption{4D convolution}
  \label{fig:fig1d}
\end{subfigure}
    \caption{The relative error (averaged over ten trials) 
    of the linear convolution between (random) $d$--dimensional 
    tensors ($d=1,\ldots,4$) over various mode lengths ($n=2,\ldots,9$). For
    any dimension and mode length, the same pair of tensors were used in all
    convolution algorithms.}
\end{figure}

\subsection{Accuracy of the Toom-Cook method}
\Cref{fig:fig1a,fig:fig1b,fig:fig1c,fig:fig1d} show the relative error of the
Toom-Cook method using small integer and Chebyshev nodes. Both methods incur
substantial errors when the input size exceeds size six, especially as the
dimension of the problem increases. For multidimensional problems, the use of
Chebyshev nodes can significantly reduce the relative error from using integer
nodes. However, we observe that once the algorithm is used for 3D or 4D
convolution with inputs greater than size seven, the use of Chebyshev nodes
still leads to high errors.

\subsection{Accuracy of the Winograd convolution algorithm}
We implement Winograd's convolution algorithm based on the formulation
from~\cref{sec:fastMod} using the list of polynomial divisors $m^{(i)}$
from~\cref{tab:results3} and plot the average relative errors
in~\cref{fig:fig1a,fig:fig1b,fig:fig1c,fig:fig1d}. We observe that Winograd's
convolution algorithm with just one superlinear polynomial divisor (increasing
rank by 1 with respect to optimum), as is the case for convolutions of size up
to $6$, can significantly reduce the relative error compared to the Toom-Cook
method with Chebyshev nodes.  Furthermore, the number of flops required is only
marginally larger than that of the Toom-Cook method.  For a $5$-dimensional
convolution, the number of additions, multiplications, and rank of Winograd's
convolution algorithm (\cref{tab:results3}) and the Toom-Cook method
(\cref{tab:results1}) suggests that Winograd's convolution algorithm can
achieve highly accurate results without a significant increase in arithmetic or
use of complex arithmetic as with the DFT.

\subsection{Accuracy of the nested Toom-Cook method}
We show the accuracy of the nested Toom-Cook method
in~\cref{fig:fig1a,fig:fig1b,fig:fig1c,fig:fig1d}.  Like the Winograd
convolution algorithm, this algorithm can significantly reduce the error of the
Toom-Cook method. A downside to the nested Toom-Cook approach is that the
bilinear rank is greatly increased as compared to Winograd's convolution
algorithm. For example, our implementation of a Winograd's convolution
algorithm for $8$-dimensional vectors has a rank of $17$, whereas the nested
Toom-Cook algorithm has a bilinear rank of $27$. 

However, a benefit of the nested Toom-Cook method is that the number of
non--zeros in both the encoding and decoding matrices are lower than Winograd's
convolution algorithm, as shown seen in~\cref{tab:results2}
and~\cref{tab:results3}. For certain decompositions, the magnitude of the
matrix elements never exceeds $1$, as these matrices are built from very small
Toom-Cook methods. A nested Toom-Cook algorithm for $n=8$ can be created by a
triply nested Toom-Cook algorithm for $n=2$, whose matrices are composed of
zeros and the scalars $1$ and $1/2$ as well as their negatives. For
convolutions where the overhead of applying the encoding and decoding is
computationally expensive (such as near the leaves of the recursion tree), the
nested Toom-Cook approach offers an accurate and efficient approach.

\section{Future work} \label{sec:future}
While novel strategies to reduce error from~\cref{sec:stability}, such as
better node points and diagonal scaling, have the potential to improve the
accuracy of the convolution algorithm by a constant factor~\cite{err1,nodes},
it remains an open question on how much these strategies can improve accuracy.
A better understanding of the round-off error as well as finding an
algorithm that determines the set of nodes and diagonal scaling with the
optimal norm can help extend the use of Toom-Cook convolution algorithms to
larger filter sizes. Similarly, there remains space for a more comprehensive
search of divisor polynomials to produce encoding and decoding matrices with an
optimal balance between sparsity, rank, and conditioning for the Winograd
convolution algorithm. Experimental results suggest that polynomial divisors
that are superlinear and added with positive/negative powers of two offer
robust numerical accuracy~\cite{beyond}. Theory that supports this claim or
finds other suitable polynomials can be of interest.

Furthermore, low--precision multipliers in deep neural networks are robust for
both training and inferencing~\cite{courbariaux2014training}. However,
existing low--precision training strategies rely on \textit{high--precision}
convolutions or dot products. Especially with the emergence of mixed--precision
training on accelerators such as Tensor Cores, deriving theoretical bounds as
well as empirical results on the interplay between low--precision arithmetic
and the accuracy of the various bilinear algorithms can be of interest to the
deep learning and high--performance computing community.

Another area for future study is the design and analysis of fast parallel 
convolution algorithms. Performance of convolution algorithms in the parallel
setting (such as on GPUs) is generally dominated by communication costs, i.e.,
the amount of data movement required to compute the algorithm. As the size of
the dataset grows, more communication is needed. Demmel and Dinh derived 
communication lower bounds for the direct computation of the convolutional
layer~\cite{comm_opt}. To extend their work, an open question is determining
how much communication is needed for fast bilinear algorithms such as the
Toom-Cook algorithm, Winograd's convolution algorithm, and Toom-Cook with
overlap-add. General approaches for deriving communication lower bounds of
bilinear algorithms~\cite{SDH_ETHZ_2015} may provide one avenue towards
understanding communication costs in fast convolution algorithms.  Moreover, a
few of the convolution algorithms we consider (the one derived
in~\cref{sec:sparse_bilinear} and the many--nested Toom-$2$ algorithm) can be
formulated by taking products with sparse matrices.  Exploration of efficient
sparse linear algebra kernels for these convolution variants may yield
convolution kernels with better performance and accuracy.

Another question is whether the interpolation and Winograd  techniques covered
in this paper encompass all possible fast bilinear algorithms for convolution.
If these techniques do cover all possible fast bilinear algorithms, this
knowledge can narrow the search for optimal bilinear algorithms in accuracy and
number of flops.

\section{Conclusion} \label{sec:conclusions}
Using the formalism of bilinear algorithms, we present different variants of
convolution, including ones based on polynomial interpolation and modular
polynomial arithmetic. We derive simple formulations for generating these
bilinear algorithms. These explicit formulations allow us to quantify the cost
of the various algorithms as well as simplify previous error bounds. Our
analysis and experiments show that the nested convolution via overlap-add and
Winograd's convolution algorithm with superlinear polynomials can be effective
for multidimensional convolution for a range of filter sizes. With the
simplified construction of these convolution algorithms in the language of
linear algebra, we hope researchers in scientific computing, applied
mathematics, and machine learning can discover new uses and methods for fast
convolution.

\section*{Acknowledgments}
We would like to thank Hung Woei Neoh for helpful discussions and the
anonymous referees for providing valuable feedback that helped improve
this manuscript. 

\bibliographystyle{siamplain} \bibliography{references} \end{document}


\maketitle

\section{A detailed example}

Here we include some equations and theorem-like environments to show
how these are labeled in a supplement and can be referenced from the
main text.
Consider the following equation:
\begin{equation}
  \label{eq:suppa}
  a^2 + b^2 = c^2.
\end{equation}
You can also reference equations such as \cref{eq:matrices,eq:bb} 
from the main article in this supplement.

\lipsum[100-101]

\begin{theorem}
  An example theorem.
\end{theorem}

\lipsum[102]
 
\begin{lemma}
  An example lemma.
\end{lemma}

\lipsum[103-105]

Here is an example citation: \cite{KoMa14}.

\section[Proof of Thm]{Proof of \cref{thm:bigthm}}
\label{sec:proof}
\lipsum[106-112]

\section{Additional experimental results}
\Cref{tab:foo} shows additional
supporting evidence. 

\begin{table}[htbp]
{\footnotesize
  \caption{Example table}  \label{tab:foo}
\begin{center}
  \begin{tabular}{|c|c|c|} \hline
   Species & \bf Mean & \bf Std.~Dev. \\ \hline
    1 & 3.4 & 1.2 \\
    2 & 5.4 & 0.6 \\ \hline
  \end{tabular}
\end{center}
}
\end{table}

\bibliographystyle{siamplain}
\bibliography{references}